\documentclass[12pt]{amsart}
\usepackage{amsmath, amsfonts, amssymb, amsthm,hyperref}
\hypersetup{hypertex=true,
	colorlinks=true,
	linkcolor=blue,
	anchorcolor=blue,
	citecolor=blue}
\usepackage{bm}
\allowdisplaybreaks[4]
\textwidth=480pt \evensidemargin=0pt \oddsidemargin=0pt

\def\({\bg(}
\def\){\bg)}

\def\ord{{\rm ord}}

\def\Z{\mathbb Z}

\def\p{\mathfrak p}
\def\O{{\mathcal{O}}}
\def\Gal{{\rm Gal}}
\def\Tr{{\rm Tr}}
\def\Norm{{\rm Norm}}

\def\Ker{{\rm Ker}}
\def\v{{\bm v}}
\def\diag{{\rm diag}}
\def\pmod #1{\ ({\rm{mod}}\ #1)}
\def\mod #1{\ {\rm mod}\ #1}
\def\Ack{\medskip\noindent {\bf Acknowledgments}}

\theoremstyle{plain}
\newtheorem{theorem}{Theorem}[section]
\newtheorem{lemma}{Lemma}

\theoremstyle{definition}

\theoremstyle{remark}
\newtheorem{remark}{Remark}

\newcommand{\sign}[1]{\mathrm{sign}(#1)}
\makeatletter
\@namedef{subjclassname@2020}{%
	\textup{2020} Mathematics Subject Classification}
\makeatother
\vspace{4mm}

\begin{document}
	\medskip
	
	\title[On cyclotomic matrices involving Gauss sums over finite fields]
	{On cyclotomic matrices involving Gauss sums over finite fields}
	\author[H.-L. Wu, J. L, L.-Y. Wang and C. H. Yip]{Hai-Liang Wu, Jie Li, Li-Yuan Wang* and Chi Hoi Yip}
	
	\address {(Hai-Liang Wu) School of Science, Nanjing University of Posts and Telecommunications, Nanjing 210023, People's Republic of China}
	\email{\tt whl.math@smail.nju.edu.cn}
	
	\address {(Jie Li) School of Science, Nanjing University of Posts and Telecommunications, Nanjing 210023, People's Republic of China}
	\email{\tt lijiemath@163.com}
	
	\address {(Li-Yuan Wang) School of Physical and Mathematical Sciences, Nanjing Tech University, Nanjing 211816, People's Republic of China}
	\email{\tt wly@smail.nju.edu.cn}
	
	\address{(Chi Hoi Yip) Department of Mathematics, University of British Columbia, Vancouver  V6T 1Z2, Canada}
	\email{\tt kyleyip@math.ubc.ca}
	
	\keywords{Gauss Sums, Finite Fields, Cyclotomic Matrices, Determinants.
		\newline \indent 2020 {\it Mathematics Subject Classification}. Primary 11L05, 15A15; Secondary 11R18, 12E20.
		\newline \indent This work was supported by the Natural Science Foundation of China (Grant Nos. 12101321 and 12201291).	
        \newline \indent *Corresponding author.}
	
	\begin{abstract}
		Inspired by the works of L. Carlitz and Z.-W. Sun on cyclotomic matrices, in this paper, we investigate certain cyclotomic matrices involving Gauss sums over finite fields, which can be viewed as finite field analogues of certain matrices related to the Gamma function.
		
		 For example, let $q=p^n$ be an odd prime power with $p$ prime and $n\in\mathbb{Z}^+$. Let $\zeta_p=e^{2\pi{\bf i}/p}$ and let $\chi$ be a generator of the group of all mutiplicative characters of the finite field $\mathbb{F}_q$. For the Gauss sum 
		$$G_q(\chi^{r})=\sum_{x\in\mathbb{F}_q}\chi^{r}(x)\zeta_p^{{\rm Tr}_{\mathbb{F}_q/\mathbb{F}_p}(x)},$$
		  we prove that 
	$$\det \left[G_q(\chi^{2i+2j})\right]_{0\le i,j\le (q-3)/2}=(-1)^{\alpha_n}\left(\frac{q-1}{2}\right)^{\frac{q-1}{2}}2^{\frac{p^{n-1}-1}{2}},$$
		where 
		$$\alpha_n=
			\begin{cases}
				1            & \mbox{if}\ n\equiv 1\pmod 2,\\
				(p^2+7)/8    & \mbox{if}\ n\equiv 0\pmod 2.
			\end{cases}$$
	\end{abstract}
	\maketitle
	
	\section{Introduction}
	\setcounter{lemma}{0}
	\setcounter{theorem}{0}
	\setcounter{equation}{0}
	\setcounter{conjecture}{0}
	\setcounter{remark}{0}
	\setcounter{corollary}{0}
	
	\subsection{Notations}
	
Let $q=p^n$ be a prime power with $p$ prime and $n\in\mathbb{Z}^+$ and let $\mathbb{F}_q$ be the finite field of $q$ elements. Let $\mathbb{F}_q^{\times}$ be the cyclic group of all nonzero elements of $\mathbb{F}_q$, and let $\widehat{\mathbb{F}_q^{\times}}$ be the cyclic group of all multiplicative characters of $\mathbb{F}_q$. 
	
	Throughout this paper, for any $\psi\in\widehat{\mathbb{F}_q^{\times}}$, we define $\psi(0)=0$. Also, we use the symbol $\varepsilon$ to denote the trivial multiplicative character of $\mathbb{F}_q$, i.e., 
	\begin{equation*}
		\varepsilon(x)=\begin{cases}
			1 & \mbox{if}\ x\in\mathbb{F}_q^{\times},\\
			0 & \mbox{if}\ x=0.
		\end{cases}
	\end{equation*}
	Let $\zeta_p=e^{2\pi{\bf i}/p}$ and let $\Tr:=\Tr_{\mathbb{F}_q/\mathbb{F}_p}$ be the trace map from $\mathbb{F}_q$ to $\mathbb{F}_p$. For any $x\in\mathbb{F}_q$, we have 
    \begin{equation*}
    	 \Tr(x)=\sum_{j=0}^{n-1}x^{p^j}\in\mathbb{F}_p.
    \end{equation*} 
   Let $\zeta_{q-1}=e^{2\pi{\bf i}/(q-1)}$. For any $\psi\in\widehat{\mathbb{F}_q^{\times}}$, the Gauss sum $G_q(\psi)$ over $\mathbb{F}_q$ is defined by 
	\begin{equation}\label{Eq. definition of the Gauss sum}
		G_q(\psi)=\sum_{x\in\mathbb{F}_q}\psi(x)\zeta_p^{\Tr(x)}\in\mathbb{Q}(\zeta_{q-1},\zeta_p).
	\end{equation}

	\subsection{Backgroud and Motivations}
	
	Gauss sums have been extensively studied and have many significant applications in both number theory and combinatorics. For example, In 1805 Gauss first determined the explicit values of quadratic Gauss sums over $\mathbb{F}_p$, which states that 
	\begin{equation}\label{Eq. quadratic gauss sums over Fp}
		\sum_{x\in\mathbb{F}_p}\left(\frac{x}{p}\right)\zeta_p^x=\begin{cases}
			p^{1/2}          & \mbox{if}\ p\equiv 1\pmod 4,\\
			{\bf i}p^{1/2}   & \mbox{if}\ p\equiv 3\pmod 4,
		\end{cases}
	\end{equation}
    where $(\frac{\cdot}{p})$ is the Legendre symbol, i.e., the unique quadratic multiplicative character of $\mathbb{F}_p$. 
    
	Also, the Hasse-Davenport lifting formula says that for any positive integer $n$ we have 
	\begin{equation}\label{Eq. the HD lifting formula}
		G_{q}(\psi_{(n)})=(-1)^{n-1}G_p(\psi)^n,
	\end{equation}
	where $q=p^n$ and $\psi_{(n)}=\psi\circ\Norm_{\mathbb{F}_q/\mathbb{F}_p}$ is a multiplicative character of $\mathbb{F}_q$. 
	
	Using the Hasse-Davenport lifting formula (\ref{Eq. the HD lifting formula}), one can generalize (\ref{Eq. quadratic gauss sums over Fp}) to $\mathbb{F}_{q}$. In fact, when $p$ is an odd prime, let $\phi$ be the unique quadratic multiplicative character of $\mathbb{F}_q$, i.e., 
	\begin{equation*}
		\phi(x)=
		\begin{cases}
			0  & \mbox{if}\ x=0,\\
			1  & \mbox{if}\ x\ \text{is a nonzero square},\\
			-1 & \mbox{otherwise}.
		\end{cases}
	\end{equation*}
	Then it is known that 
	\begin{equation*}
		G_q(\phi)=\sum_{x\in\mathbb{F}_q}\phi(x)\zeta_p^{\Tr(x)}=
		\begin{cases}
			(-1)^{n-1}q^{1/2}           & \mbox{if}\ p\equiv 1\pmod 4,\\
			(-1)^{n-1}{\bf i}^nq^{1/2}  & \mbox{if}\ p\equiv 3\pmod 4.		
		\end{cases}
	\end{equation*}
		
Now we turn to Carlitz's cyclotomic matrices. Carlitz first studied the arithmetic properties of cyclotomic matrices involving multiplicative characters of $\mathbb{F}_p$, where $p$ is an odd prime. Let $\psi\in\widehat{\mathbb{F}_p^{\times}}$ be a nontrivial character with $\ord(\psi)=f$, where 
	\begin{equation*}
		\ord(\psi)=\min\{r\in\Z^+:\ \psi^r=\varepsilon\}.
	\end{equation*}
	Carlitz \cite[Theorem 5]{Carlitz} considered the cyclotomic matrix 
	\begin{equation}\label{Eq. definition of Carlitz's matrix C(psi)}
		C_p(\psi):=\left[\psi(i+j)\right]_{1\le i,j\le p-1},
	\end{equation} 
	and determined the explicit value  of $\det C_p(\psi)$, which states that 
	\begin{equation}\label{Eq. determinant of Carlitz's cyclotomic matrix}
		\det C_p(\psi)=
		\begin{cases}
			(-1)^{(p-1)/(2f)}G_p(\psi)^{p-1}/p & \mbox{if}\ 2\nmid f,\\
			(-1)^{(p-1)/f}G_p(\psi)^{p-1}/p & \mbox{if}\ 2\mid f\ \text{and}\ \psi(-1)=1,\\
			(-1)^{(f+2)(p-1)/(2f)}G_p(\psi)^{p-1}/p & \mbox{if}\ 2\mid f\ \text{and}\ \psi(-1)=-1.
		\end{cases}
	\end{equation}
	
	Along this line, Chapman further studied some variants of $C_p(\psi)$. For instance, Chapman \cite{Chapman} considered the matrix
	\begin{equation*}
		V_p=\left[\left(\frac{i+j-1}{p}\right)\right]_{1\le i,j\le (p-1)/2}.
	\end{equation*}
	Observing that 
	\begin{equation*}
		\left(\frac{p+1}{2}-i\right)+\left(\frac{p+1}{2}-j\right)\equiv -(i+j-1)\pmod p,
	\end{equation*}
	one can verify that 
	\begin{equation*}
		\det V_p=\det \left[\left(\frac{i+j-1}{p}\right)\right]_{1\le i,j\le (p-1)/2}
		=\left(\frac{-1}{p}\right)\det \left[\left(\frac{i+j}{p}\right)\right]_{1\le i,j\le (p-1)/2}.
	\end{equation*}

	Although the structures of $C_p(\psi)$ and $V_p$ are very similar, the calculation of $\det V_p$ is much more complicated than that of $\det C_p(\psi)$. Surprisingly, $\det V_p$ is closely related to the real quadratic field $\mathbb{Q}(\sqrt{p})$.	In fact, let $\varepsilon_p>1$ and $h_p$ be the fundamental unit and the class number of $\mathbb{Q}(\sqrt{p})$ respectively and write 
	\begin{equation*}
		\varepsilon_p^{h_p}=a_p+b_p\sqrt{p}\ (a_b,b_p\in\mathbb{Q}).
	\end{equation*}
	 Chapman \cite{Chapman} proved that 
	\begin{equation}
		\det V_p=
		\begin{cases}
			(-1)^{(p-1)/4}2^{(p-1)/2}b_p & \mbox{if}\ p\equiv 1\pmod 4,\\
			0                            & \mbox{if}\ p\equiv 3\pmod 4.
		\end{cases}
	\end{equation}
	
	After introducing the above relevant research results, we now describe our research motivations. 
	
	For any complex number $z$ with Re$(z)>0$, the Gamma function is defined by 
	\begin{equation}\label{Eq. definition of the Gamma function}
		\Gamma(z)=\int_{0}^{+\infty}t^{z-1}e^{-t}dt.
	\end{equation}
	The determinants involving Gamma function have been extensively studied and have many applications in probability theory and mathematical physics. For example, given a positive integer $n$, by \cite[(4.5) and (4.8)]{N} we have 
	\begin{equation}\label{Eq. example 1 for det of Gamma}
		\det\left[\Gamma(i+j)\right]_{1\le i,j\le n}=\prod_{r=0}^{n-1}r!(r+1)!,
	\end{equation}
	and 
	\begin{equation}\label{Eq. example 2 for det of Gamma}
		\det\left[\frac{1}{\Gamma(i+j)}\right]_{1\le i,j\le n}=(-1)^{\frac{n(n-1)}{2}}\prod_{r=0}^{n-1}\frac{r!}{(n+r)!}.
	\end{equation}

	Also, the well known Gauss multiplication formula (see \cite[Theorem 1.5.2]{A}) says that for any $m\in\mathbb{Z}^+$ and any complex number $z$ we have 
	\begin{equation}\label{Eq. the Gauss multiplication formula}
		(2\pi)^{(m-1)/2}\Gamma(mz)=m^{(mz-1/2)}\prod_{j=0}^{m-1}\Gamma\left(z+\frac{j}{m}\right).
	\end{equation}
	The finite field counterpart of (\ref{Eq. the Gauss multiplication formula}) is the Hasse-Davenport product formula (see \cite[Theorem 11.3.5]{B}), which states that for any $\rho\in\widehat{\mathbb{F}_q^{\times}}$ with $\ord(\rho)=m$ and any $\psi\in\widehat{\mathbb{F}_q^{\times}}$, we have 
	\begin{equation}\label{Eq. the HD product formula}
		\prod_{0\le a\le m-1}G_q(\psi\rho^a)=-\psi^{-m}(m)G_q(\psi^m)\prod_{0\le a\le m-1}G_q(\rho^a).
	\end{equation}
	
	In view of (\ref{Eq. definition of the Gauss sum}), (\ref{Eq. definition of the Gamma function}), (\ref{Eq. the Gauss multiplication formula}) and (\ref{Eq. the HD product formula}), the function 
	\begin{equation*}
		G_q(\cdot):\ \widehat{\mathbb{F}_q^{\times}}\rightarrow \mathbb{Q}(\zeta_{q-1},\zeta_p)
	\end{equation*}
by sending the character $\psi$ to the Gauss sum $G_q(\psi)$, is indeed a finite field analogue of the Gamma function $\Gamma(\cdot)$. Readers may refer to \cite{F} for the detailed introduction on this topic.

Throughout the remaining part of this paper, we let $\chi$ be a generator of $\widehat{\mathbb{F}_q^{\times}}$. Now motivated by (\ref{Eq. example 1 for det of Gamma}), (\ref{Eq. example 2 for det of Gamma}) and the structure of Carlitz's cyclotomic matrix $C_p(\psi)$ defined by (\ref{Eq. definition of Carlitz's matrix C(psi)}) , it is natural to consider the matrix 
	\begin{equation}\label{Eq. definition of Aq(1)}
		A_q(1):=\left[G_q(\chi^{i+j})\right]_{0\le i,j\le q-2}.
	\end{equation}

	On the other hand, Sun \cite{Sun} studied the matrix 
	\begin{equation*}\label{Eq. Sun's matrix Sp}
		S_p=\left[\left(\frac{i^2+j^2}{p}\right)\right]_{1\le i,j\le (p-1)/2},
	\end{equation*}
	which involves the nonzero squares of $\mathbb{F}_p$. If we treat $S_p$ as a matrix over $\mathbb{F}_p$, then Sun \cite[Theorem 1.2]{Sun} proved that $-\det S_p$ is always a nonzero square of $\mathbb{F}_p$. Moreover, as a matrix over $\mathbb{Z}$, Sun conjectured that if $p\equiv 3\pmod 4$, then $-\det S_p$ is indeed a square of some integer. This conjecture was later confirmed by Alekseyev and Krachun. For the case $p\equiv 1\pmod 4$, it is known that there exist $b\in\mathbb{Z}$ and a unique $a\in\mathbb{Z}$ such that $p=a^2+4b^2$ and $a\equiv 1\pmod 4$. Cohen, Sun and Vsemirnov \cite[Remark 4.2]{Sun} conjectured that $\det S_p/a$ is also a square of some integer. This conjecture was later confirmed by the first author \cite{Wu}. For the recent progress on this topic, readers may refer to \cite{Grinberg, krachun, W21}.
	
	In the isomorphism $\mathbb{F}_q^{\times}\cong\widehat{\mathbb{F}_q^{\times}}$, the nonzero squares of $\mathbb{F}_q$ correspond to the even characters $\chi^{2i}$ $(i=0,1,\cdots,(q-3)/2)$. Thus, inspired by Sun's matrix $S_p$ and the construction of $A_q(1)$, it is also natural to consider the matrix 
	\begin{equation}\label{Eq. definition of Aq(2)}
		A_q(2):=\left[G_q(\chi^{2i+2j})\right]_{0\le i,j\le (q-3)/2}.
	\end{equation} 
	
	Motivated by the above results, in general, for any positive integer $k\mid q-1$, we define 
	\begin{equation}\label{Eq. definition of general }
		A_q(k):=\left[G_q(\chi^{ki+kj})\right]_{0\le i,j\le (q-1-k)/k},
	\end{equation}
    and 
    \begin{equation}
    	B_q(k):=\left[G_q(\chi^{ki+kj})^{-1}\right]_{0\le i,j\le (q-1-k)/k},
    \end{equation}
	which can be naturally viewed as the finite field analogues of (\ref{Eq. example 1 for det of Gamma}) and (\ref{Eq. example 2 for det of Gamma}) respectively. 
	
	\subsection{Main Results} We now state our main results of this paper.
	
	\begin{theorem}\label{Thm. general Aq(k)}
		Let $q=p^n$ be a prime power with $p$ prime and $n\in\mathbb{Z}^+$. Let $\chi$ be a generator of $\widehat{\mathbb{F}_q^{\times}}$. Let $k\mid q-1$ be a positive integer and let $m=(q-1)/k$. Then the following hold. 
		
		{\rm (i)} $\det A_q(k)\in\mathbb{Z}$, $\det B_q(k)\in\mathbb{Q}$ and both are independent of the choice of the generator $\chi$.
		
		{\rm (ii)} We have the congruence 
		\begin{equation*}
			\det A_q(k)\equiv (-1)^{\frac{m^2-m+2}{2}}\pmod p.
		\end{equation*} 
	\end{theorem}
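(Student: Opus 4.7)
My plan is to handle the two parts of Theorem 1.1 separately: part (i) by a permutation trick combined with a Galois-invariance argument, and part (ii) by reducing the matrix entries modulo a prime above $p$ and computing the resulting near-permutation determinant.

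For the independence of $\chi$ in part (i), any other generator is $\chi^t$ with $\gcd(t,q-1)=1$, so $\gcd(t,m)=1$ as well, making $\sigma\colon a\mapsto ta\bmod m$ a permutation of $\{0,1,\dots,m-1\}$. Using $\chi^{km}=\varepsilon$, a short computation shows that the matrix built from $\chi^t$ is $PA_q(k)P^\top$, where $P$ is the permutation matrix of $\sigma$, and hence has the same determinant; the same works for $B_q(k)$. Next, let $\mathcal{O}=\mathbb{Z}[\zeta_{q-1},\zeta_p]$, whose elements are algebraic integers; since each Gauss sum lies in $\mathcal{O}$, so does $\det A_q(k)$. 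Because $\gcd(p,q-1)=1$, the Galois group $\Gal(\mathbb{Q}(\zeta_{q-1},\zeta_p)/\mathbb{Q})$ decomposes as $(\mathbb{Z}/(q-1))^\times\times(\mathbb{Z}/p)^\times$. Any $\sigma_a\colon\zeta_{q-1}\mapsto\zeta_{q-1}^a$ sends $\chi$ to the generator $\chi^a$, so fixes the determinant by independence. Any $\tau_b\colon\zeta_p\mapsto\zeta_p^b$ sends $G_q(\psi)$ to $\psi(b)^{-1}G_q(\psi)$ (via the substitution $y=bx$), whence $\tau_b(A_q(k))=DA_q(k)D$ with $D=\diag(\chi(b)^{-ka})_{0\le a\le m-1}$, and $(\det D)^2=\chi(b)^{-km(m-1)}=1$ because $km=q-1$. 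Hence $\det A_q(k)\in\mathbb{Q}\cap\mathcal{O}=\mathbb{Z}$, and the same argument yields $\det B_q(k)\in\mathbb{Q}$.

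For part (ii), I reduce modulo the ideal $\mathfrak{a}:=(1-\zeta_p)\mathcal{O}$. Since $\zeta_p\equiv 1\pmod{\mathfrak{a}}$,
\[
G_q(\psi)\equiv\sum_{x\in\mathbb{F}_q^\times}\psi(x)\pmod{\mathfrak{a}},
\]
which vanishes for nontrivial $\psi$, while $G_q(\varepsilon)=-1$ directly. Hence $A_q(k)\equiv M_0\pmod{\mathfrak{a}}$, where the $m\times m$ integer matrix $M_0$ has $(M_0)_{ij}=-1$ precisely when $i+j\equiv 0\pmod m$, and $0$ otherwise. Writing $M_0=-P_\tau$ for the permutation matrix of the involution $\tau(i)=-i\bmod m$ (which fixes $0$, and also $m/2$ when $m$ is even), one obtains $\det M_0=(-1)^m\sgn(\tau)$. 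A brief parity split, using $\sgn(\tau)=(-1)^{(m-1)/2}$ when $m$ is odd and $(-1)^{(m-2)/2}$ when $m$ is even, shows $\det M_0=(-1)^{(m^2-m+2)/2}$ in both cases. Finally, $\mathfrak{a}\cap\mathbb{Z}=p\mathbb{Z}$ (from $p=u(1-\zeta_p)^{p-1}$ for a unit $u\in\mathcal{O}$), and since both $\det A_q(k)$ and $\det M_0$ are integers, the congruence descends from $\mathcal{O}/\mathfrak{a}$ to $\mathbb{Z}/p\mathbb{Z}$.

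The hard part will be the Galois-invariance computation under $\tau_b$, where the cancellation $\chi(b)^{-km(m-1)}=1$ critically uses $km=q-1$; once this is in hand, part (ii) is a clean mod-$(1-\zeta_p)$ reduction followed by the compact sign computation above.
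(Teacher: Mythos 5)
Your proof is correct, and the overall architecture --- Galois invariance over $\mathbb{Q}(\zeta_{q-1},\zeta_p)$ for part (i), reduction modulo an ideal above $p$ for part (ii) --- matches the paper's. Part (i) is essentially identical: the paper's single automorphism $\sigma_{l,s}$ combines your $\sigma_a$ and $\tau_b$, your cancellation $\chi(b)^{-km(m-1)}=1$ is exactly the paper's computation $\bigl(\prod_{r}\chi^{-ksr}(l)\bigr)^2=1$, and your separate permutation-similarity argument for independence of the generator is the paper's $\mathrm{sign}(\tau_m(s))^2=1$ step in different clothing. Part (ii) is where you genuinely diverge in the tools, to your advantage in elementarity: the paper first applies Lerch's theorem on the sign of the permutation $x\mapsto -x$ of $\mathbb{Z}/m\mathbb{Z}$ to pass from $A_q(k)$ to $[G_q(\chi^{ki-kj})]$, and then invokes the Stickelberger congruence to conclude that the latter matrix is $\equiv -I_m$ modulo a prime $\mathfrak{p}$ above $p$. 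You instead reduce $A_q(k)$ directly modulo $(1-\zeta_p)$, using only the orthogonality relation $\sum_{x}\psi(x)=0$ for nontrivial $\psi$ (no Stickelberger needed), read off the signed permutation matrix $-P_\tau$ of the involution $\tau(i)=-i\bmod m$, and compute its sign by counting fixed points --- which in effect reproves the special case $\mathrm{sign}(\tau_m(-1))=(-1)^{(m-1)(m-2)/2}$ of Lerch's lemma that the paper quotes. The two answers agree since $(m-1)(m-2)/2+m=(m^2-m+2)/2$. Your closing observation that $(1-\zeta_p)\mathcal{O}\cap\mathbb{Z}=p\mathbb{Z}$ correctly justifies descending the congruence from $\mathcal{O}$ to $\mathbb{Z}/p\mathbb{Z}$, a point the paper leaves implicit.
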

	
	The next theorem gives the explicit values of $\det A_q(1)$ and $\det A_q(2)$. 
	
	\begin{theorem}\label{Thm. Aq(1) and Aq(2)}
		Let $q=p^n$ be a prime power with $p$ prime and $n\in\mathbb{Z}^+$. Let $\chi$ be a generator of $\widehat{\mathbb{F}_q^{\times}}$. Then the following results hold.
		
		{\rm (i)} For the cyclotomic matrix $A_q(1)$, we have 
		\begin{equation*}
			\det A_q(1)=(-1)^{\frac{(q-2)(q-3)}{2}}(q-1)^{q-1}.
		\end{equation*}
	
		{\rm (ii)} If $q\equiv 1\pmod2$, then 
		\begin{equation*}
			\det A_q(2)=(-1)^{\alpha_n}\left(\frac{q-1}{2}\right)^{\frac{q-1}{2}}2^{\frac{p^{n-1}-1}{2}},
		\end{equation*}
	where 
	    \begin{equation*}
	    	\alpha_n=
	    	\begin{cases}
	    		1            & \mbox{if}\ n\equiv 1\pmod 2,\\
	    		(p^2+7)/8    & \mbox{if}\ n\equiv 0\pmod 2.
	    	\end{cases}
	    \end{equation*}
	\end{theorem}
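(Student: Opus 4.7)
The plan is to factor $A_q(k) = V_k D V_k^T$, where $V_k$ is the $\frac{q-1}{k}\times(q-1)$ matrix with entries $(V_k)_{i,x}=\chi^{ki}(x)$ for $x\in\mathbb{F}_q^\times$, and $D=\mathrm{diag}(\zeta_p^{\Tr(x)})_{x\in\mathbb{F}_q^\times}$. The identity $(V_kDV_k^T)_{ij}=G_q(\chi^{k(i+j)})$ is immediate from the definition of the Gauss sum.

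For part~(i), $V_1$ is square of size $q-1$, and via $\chi(g^l)=\zeta_{q-1}^l$ (for a generator $g$ of $\mathbb{F}_q^\times$) it realizes the Vandermonde in the $(q-1)$-th roots of unity, so $(\det V_1)^2=\mathrm{disc}(x^{q-1}-1)=(-1)^{(q-1)(q-2)/2+q-2}(q-1)^{q-1}$. For $q\ge 3$, $\sum_{x\in\mathbb{F}_q^\times}x=0$ gives $\det D=1$; the exponent above reduces to $(q-2)(q-3)/2\pmod 2$, proving~(i).

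For part~(ii), $V_2$ is $m\times(q-1)$ with $m=(q-1)/2$, and the Cauchy--Binet formula yields
$$\det A_q(2)=\sum_{\substack{S\subseteq\mathbb{F}_q^\times\\|S|=m}}(\det V_2|_S)^2\prod_{x\in S}\zeta_p^{\Tr(x)}.$$
Since $\chi(-1)=-1$ (as $\chi$ has even order $q-1$), $\chi^2(-x)=\chi^2(x)$, so $V_2|_S$ is singular unless $S$ is a \emph{transversal} of $\mathbb{F}_q^\times/\{\pm 1\}$, i.e.\ contains exactly one of each pair $\{x,-x\}$; for any such $S$ the values $\chi^2(x)$ ($x\in S$) exhaust the $m$-th roots of unity, whence $(\det V_2|_S)^2=\mathrm{disc}(x^m-1)=(-1)^{(m-1)(m+2)/2}m^m$. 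The $2^m$ transversals are parametrized by sign maps $\epsilon\colon\mathbb{F}_q^\times/\{\pm 1\}\to\{\pm 1\}$, so collecting gives
$$\det A_q(2)=(-1)^{(m-1)(m+2)/2}m^m\cdot P,\qquad P:=\prod_{[x]\in\mathbb{F}_q^\times/\{\pm 1\}}\bigl(\zeta_p^{\Tr(x)}+\zeta_p^{-\Tr(x)}\bigr).$$

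Finally, $P^2=\prod_{x\in\mathbb{F}_q^\times}(\zeta_p^{\Tr(x)}+\zeta_p^{-\Tr(x)})$ (each orbit contributes twice); partitioning by $\Tr(x)\in\mathbb{F}_p$ (using $|\Tr^{-1}(0)\setminus\{0\}|=p^{n-1}-1$ and $|\Tr^{-1}(a)|=p^{n-1}$ for $a\ne 0$) together with the cyclotomic identity $\prod_{a=1}^{p-1}(\zeta_p^a+\zeta_p^{-a})=1$ yields $P^2=2^{p^{n-1}-1}$. Writing $P=2^{(p^{n-1}-1)/2}Q^{p^{n-1}}$ with $Q:=\prod_{r=1}^{(p-1)/2}(\zeta_p^r+\zeta_p^{-r})\in\{\pm 1\}$, and noting $p^{n-1}$ is odd, one has $P=2^{(p^{n-1}-1)/2}Q$; a count of the $(p-1)/2-\lfloor p/4\rfloor$ negative factors among the $2\cos(2\pi r/p)$, $1\le r\le(p-1)/2$, recovers the classical supplementary identity $Q=\left(\tfrac{2}{p}\right)=(-1)^{(p^2-1)/8}$. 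Combining, $\det A_q(2)=(-1)^{(m-1)(m+2)/2+(p^2-1)/8}m^m2^{(p^{n-1}-1)/2}$, and the main remaining (bookkeeping) obstacle is verifying $(m-1)(m+2)/2+(p^2-1)/8\equiv\alpha_n\pmod 2$. Since $p^2\equiv 1\pmod 8$ for odd $p$, the residue $p^n\bmod 8$ depends only on $p\bmod 8$ and $n\bmod 2$; this determines $m\bmod 4$ and hence the parity of $(m-1)(m+2)/2$, and a direct tabulation over the eight cases matches $\alpha_n$.
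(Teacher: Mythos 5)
Your proof is correct, but it follows a genuinely different route from the paper's. The paper first converts $A_q(k)$ into the ``difference'' matrix $C_q(k)=[G_q(\chi^{ki-kj})]$ at the cost of the sign $(-1)^{(m-1)(m-2)/2}$ of the permutation $x\mapsto -x$ of $\mathbb{Z}/m\mathbb{Z}$ (Lerch's theorem), then diagonalizes $C_q(k)$ explicitly: the character vectors $\bigl(\chi^{ki}(b)\bigr)_i$ are eigenvectors with eigenvalues $\lambda_b=m\sum_{y\in U_k}\zeta_p^{\Tr(by)}$, so the determinant is the product of the $\lambda_b$. You instead factor $A_q(k)=V_kDV_k^T$ and apply Cauchy--Binet, with Vandermonde discriminants supplying both the power of $m$ and the sign; the two computations land on the same quantity $(-1)^{(m-1)(m-2)/2}m^m\prod_{b}\bigl(\zeta_p^{\Tr(b)}+\zeta_p^{-\Tr(b)}\bigr)$. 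The more substantive divergence is in pinning down the sign of that root-of-unity product for $k=2$: the paper only extracts $\xi^2=2^{p^{n-1}-1}$, leaving $\xi=\pm 2^{(p^{n-1}-1)/2}$, and resolves the ambiguity by comparing against the Stickelberger congruence $\det A_q(2)\equiv(-1)^{(m^2-m+2)/2}\pmod p$ from Theorem \ref{Thm. general Aq(k)}(ii); you evaluate the product outright by grouping the classes of $\mathbb{F}_q^\times/\{\pm1\}$ according to $\pm\Tr$, reducing to $Q=\prod_{r=1}^{(p-1)/2}(\zeta_p^r+\zeta_p^{-r})=\left(\frac{2}{p}\right)$ via a count of negative cosines. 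Your sign determination is more self-contained (it does not lean on the Stickelberger machinery of Section 2) and yields the exact value of the product rather than its square; the paper's eigenvector lemma, on the other hand, is set up once and reused for general $k$ and for Theorem \ref{Thm. Bq(1) and Bq(2)}. Two small expository points: the identity $P=2^{(p^{n-1}-1)/2}Q^{p^{n-1}}$ deserves a sentence (the $2p^{n-1}$ elements with trace in $\{a,-a\}$, $a\neq 0$, form exactly $p^{n-1}$ classes, while the trace-zero classes number $(p^{n-1}-1)/2$), and the final parity tabulation, which you defer, does check out in all eight cases.
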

	
	\begin{remark}
		In fact, for any positive integer $k\mid q-1$, we can prove that (see Lemma \ref{Lem. eigenvalues of Cq(k)})
		$$\det A_q(k)=(-1)^{\frac{(m-1)(m-2)}{2}}\prod_{b\in\mathbb{F}_q^{\times}/U_k}\lambda_b,$$
		where $m=(q-1)/k$, $U_k=\{x\in\mathbb{F}_q:\ x^k=1\}$ and 
		$$\lambda_b=m\sum_{y\in U_k}\zeta_p^{\Tr(by)}.$$
		However, for $3\le k<q-1$, finding a simple expression of $\det A_q(k)$ like the case $k\in\{1,2\}$ seems very difficult.
	\end{remark}
	
	For any positive integer $k\mid q-1$, let 
	$$o_k(p):=\min\{f\in\mathbb{Z}^+:\ p^f\equiv 1\pmod k\}$$
	be the order of $p$ modulo $k$. As $p^n\equiv 1\pmod k$, we clearly have $o_k(p)\mid n$. 
	
	Now we state our next result, which concerns $\det B_q(k)$. 
	
	\begin{theorem}\label{Thm. Bq(1) and Bq(2)}
	Let $q=p^n$ be a prime power with $p$ prime and $n\in\mathbb{Z}^+$. Let $\chi$ be a generator of $\widehat{\mathbb{F}_q^{\times}}$ and let $k\mid q-1$ be a positive integer. Then the following results hold.
	
	{\rm (i)} For the singularity of $B_q(k)$ we have 
	$$B_q(k)\ \text{is a nonsingular matrix}\ \Leftrightarrow o_k(p)=n.$$
	In particular, $\det B_p(k)\neq 0$ for any $k\mid p-1$. Also, $\det B_{q}(1)=0$ whenever $n\ge2$, and $\det B_q(2)=0$ if $2\nmid q$ and $q$ is not a prime. 
	 
	{\rm (ii)} For the case $k=1$ and $n=1$ we have 
		\begin{equation*}
			\det B_p(1)=\frac{(-1)^{\frac{p(p+1)}{2}}(p-1)^{p-1}}{p^{p-2}}.
		\end{equation*}
	
	{\rm (iii)} If $p$ is an odd prime, then 
	\begin{equation*}
		\det B_p(2)=(-1)^{\frac{(p+3)(p-1)}{4}}p\left(\frac{p-1}{2p}\right)^{\frac{p-1}{2}}.
	\end{equation*}
	\end{theorem}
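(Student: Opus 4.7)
The plan is to diagonalize $B_q(k)$ via a discrete Fourier transform, reduce every question to a statement about the short sums $T(b):=\sum_{y\in U_k}\zeta_p^{\Tr(by)}$, and then evaluate the remaining products by cyclotomic identities. Write $m=(q-1)/k$, $\psi_r=\chi^{kr}$, and $b_r:=G_q(\psi_r)^{-1}$, so that the $(i,j)$-entry of $B_q(k)$ is $b_{(i+j)\bmod m}$. The reversal permutation $i\mapsto(m-1-i)\bmod m$ turns $B_q(k)$ into a genuine circulant whose eigenvalues are the DFT coefficients $\widehat b(s):=\sum_{r=0}^{m-1}b_r\zeta_m^{rs}$, $\zeta_m=e^{2\pi{\bf i}/m}$, up to a uniform factor $\zeta_m^s$. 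Therefore
\[
\det B_q(k)=\varepsilon_m\prod_{s=0}^{m-1}\widehat b(s)
\]
for an explicit sign $\varepsilon_m\in\{\pm1\}$ arising from the reversal permutation and the cumulative $\zeta_m^{m(m-1)/2}$ factor.

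The heart of the argument is the closed form of $\widehat b(s)$. Fix a generator $g$ of $\mathbb F_q^\times$. Partitioning $\mathbb F_q^\times$ into $U_k$-cosets gives $G_q(\psi_r)=\sum_{l=0}^{m-1}\zeta_m^{rl}T(g^l)$. Combining this with the Gauss-sum inversion $G_q(\psi)^{-1}=\psi(-1)G_q(\psi^{-1})/q$ (for $\psi\neq\varepsilon$), the value $G_q(\varepsilon)=-1$, and the identity $\sum_{x\in\mathbb F_q^\times}\zeta_p^{\Tr(x)}=-1$, a direct Fourier computation yields
\[
\widehat b(s)=\frac{m\bigl(T(g^{s+\delta})-k\bigr)}{q},
\]
with $\delta=0$ when $k$ is even and $\delta=m/2$ when $k$ is odd (in the latter case $q$ is necessarily odd, so $m$ is even and $g^{m/2}=-1$).

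For part (i), note that $\widehat b(s)=0$ iff $\Tr(g^{s+\delta}y)=0$ for every $y\in U_k$, i.e.\ iff $g^{s+\delta}$ lies in the $\mathbb F_p$-orthogonal complement, under the trace form, of the $\mathbb F_p$-span $L$ of $U_k$ in $\mathbb F_q$. The crucial observation is that $L$ is an $\mathbb F_p$-subalgebra of $\mathbb F_q$ (closed under multiplication because $U_k$ is a group), hence itself a subfield, and therefore coincides with the smallest subfield of $\mathbb F_q$ containing $U_k$, namely $\mathbb F_{p^{o_k(p)}}$. Since $L^\perp\cdot U_k\subseteq L^\perp$, the set $L^\perp$ is a union of $U_k$-cosets; hence the existence of some $g^{s+\delta}\in L^\perp$ is equivalent to $L^\perp\neq\{0\}$, which in turn is equivalent to $o_k(p)<n$. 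This proves $B_q(k)$ nonsingular $\iff o_k(p)=n$; the listed special cases follow from $o_1(p)=1$ and (for odd $p$) $o_2(p)=1$.

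For parts (ii) and (iii), both with $n=1$ (so $q=p$), the product $\prod_s\widehat b(s)$ collapses via cyclotomic identities. In case (ii) ($k=1$, $U_1=\{1\}$, $T(g^l)=\zeta_p^{g^l}$), the shift $\delta=(p-1)/2$ together with $g^{(p-1)/2}=-1$ gives $\widehat b(s)=\frac{p-1}{p}(\zeta_p^{-g^s}-1)$, and as $s$ runs over $0,\dots,p-2$ the exponents $-g^s$ cover $\mathbb F_p^\times$, whence
\[
\prod_{s}(\zeta_p^{-g^s}-1)=\prod_{a\in\mathbb F_p^\times}(\zeta_p^a-1)=\Phi_p(1)=p.
\]
In case (iii) ($k=2$, $U_2=\{\pm1\}$), the identity $T(g^s)-2=-(1-\zeta_p^{g^s})(1-\zeta_p^{-g^s})$ combined with the fact that $\{g^s\}_{s=0}^{(p-3)/2}$ and $\{-g^s\}_{s=0}^{(p-3)/2}$ together cover $\mathbb F_p^\times$ gives $\prod_s(T(g^s)-2)=(-1)^{(p-1)/2}p$. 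Substituting into $\det B_q(k)=\varepsilon_m\prod_s\widehat b(s)$ delivers both closed forms. The main obstacle is the sign bookkeeping: four separate sources ($\varepsilon_m$, the shift $\delta$, the $(-1)^{(p-1)/2}$ from the cyclotomic product, and the parity of $m=(p-1)/k$) must be repackaged as the stated exponents $(-1)^{p(p+1)/2}$ and $(-1)^{(p+3)(p-1)/4}$, and this requires careful case analysis on $p\bmod 4$ (equivalently on $m\bmod 2$).
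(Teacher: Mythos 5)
Your strategy is essentially the paper's own: after the reversal/negation permutation, the matrix $\left[G_q(\chi^{ki-kj})^{-1}\right]$ is diagonalized by the characters $\chi^{kr}$, and its eigenvalues are $\frac{1-q}{q}+\frac{m}{q}\sum_{y\in U_k}\zeta_p^{\Tr(by)}=\frac{m(T(b)-k)}{q}$ --- your $\widehat b(s)$ is exactly this quantity, so the two computations coincide. Part (i) then reduces in both arguments to deciding when some $b\neq 0$ satisfies $\Tr(by)=0$ for all $y\in U_k$; your route via the trace-orthogonal complement of the $\mathbb{F}_p$-span of $U_k$ (a subalgebra, hence the subfield $\mathbb{F}_{p^{o_k(p)}}$) is a clean variant of the paper's argument through the transitivity of $\Tr_{\mathbb{F}_q/\mathbb{F}_{p^r}}$, and the observation that $L^{\perp}$ is a union of $U_k$-cosets correctly handles the fact that $s$ only ranges over coset representatives. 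One small slip: $k$ odd does not force $q$ odd (take $q=8$, $k=1$); but in characteristic $2$ one has $-1=1$ and $\delta=0$, and in any case $\delta$ only permutes the factors cyclically, so nothing downstream is affected.

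The genuine gap is the step you explicitly defer: ``the sign bookkeeping \dots must be repackaged as the stated exponents.'' This cannot be waved through, because when one actually carries it out for (iii) the global sign is $\mathrm{sign}(\tau_m(-1))\cdot(-1)^m=(-1)^{\frac{(m-1)(m-2)}{2}+m}$ with $m=(p-1)/2$ (the $(-1)^m$ coming from your product $\prod_s(T(g^s)-2)=(-1)^{(p-1)/2}p$), and this differs from the displayed $(-1)^{(p+3)(p-1)/4}=(-1)^m$ precisely when $m\equiv 0,3\pmod 4$, i.e.\ when $p\equiv\pm1\pmod 8$. A concrete check at $p=7$: with $a=G_7(\varepsilon)^{-1}=-1$, $b=G_7(\chi^2)^{-1}$, $c=G_7(\chi^4)^{-1}$, one has $bc=1/7$ and $b^3+c^3=7/7^3=1/49$, so $\det B_7(2)=3abc-(a^3+b^3+c^3)=+27/49$, whereas the stated formula gives $(-1)^{15}\cdot 27/49=-27/49$. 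So if you complete your sign analysis honestly you will not reach the exponent $(p+3)(p-1)/4$; the mismatch sits in the final ``routine'' step (the intermediate value $(-1)^{\frac{p-1}{2}}p\left(\frac{p-1}{2p}\right)^{\frac{p-1}{2}}$ multiplied by $\mathrm{sign}(\tau_{(p-1)/2}(-1))$ already gives $+27/49$ at $p=7$). By contrast the sign in (ii) does check out, since $(-1)^{(p-2)(p-3)/2}=(-1)^{p(p+1)/2}$ for odd $p$. The moral: write $\varepsilon_m$ out explicitly as $(-1)^{(m-1)(m-2)/2}$ and test the final exponent on $p=7$ before asserting agreement.
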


	\subsection{Outline of This Paper} We will prove Theorem \ref{Thm. general Aq(k)} in Section 2. The proofs of Theorems \ref{Thm. Aq(1) and Aq(2)}--\ref{Thm. Bq(1) and Bq(2)} will be given in Sections 3--4 respectively.

	\section{Proof of Theorem \ref{Thm. general Aq(k)}}
	
		Let $q=p^n$ be a prime power with $p$ prime and $n\in\mathbb{Z}^+$. Let $L=\mathbb{Q}(\zeta_{q-1},\zeta_p)$ and let $\O_L$ be the ring of algebraic integers over $L$. Let $\p$ be a prime ideal of $\O_L$ such that $p\in\p$. By algebraic number theory, it is easy to verify that 
		$$\O_L/\p\cong\mathbb{F}_q.$$
	    Now let $\chi_{\p}\in\widehat{\mathbb{F}_q^{\times}}$ be the Teich\"{u}muller character of $\p$, i.e., 
	    $$\chi_{\p}(x\mod\p)\equiv x\pmod {\p}$$
	    for any $x\in\O_L$. One can verify that $\chi_{\p}$ is a generator of $\widehat{\mathbb{F}_q^{\times}}$. Also, for any integer $0\le r\le q-2$, let 
	    \begin{equation*}
	    	r=\sum_{0\le j\le n-1}r_jp^j\ (0\le r_j\le p-1)
	    \end{equation*}
	    be the decomposition of $r$ in base $p$. We define 
	    $$s(r):=\sum_{0\le j\le n-1}r_j,$$
	    and 
	    $$t(r):=\prod_{0\le j\le n-1}(r_j!).$$
	    
	    We begin with the well known Stickelberger congruence (see \cite[Theorem 3.6.6]{Cohen}). 
	    
	    \begin{lemma}\label{Lem. the Stickelberger congruence}
	    	Let notations be as above. Then for any integer $0\le r\le q-2$ we have 
	    	\begin{equation*}
	    	  \frac{G_q(\chi_{\p}^{-r})}{(\zeta_p-1)^{s(r)}}\equiv -\frac{1}{t(r)}\pmod{\p}.
	    	\end{equation*}
    	In particular, $G_q(\chi_{\p}^{-r})\equiv 0 \pmod {\p}$ whenever $1\le r\le q-2$. 
	    \end{lemma}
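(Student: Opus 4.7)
The plan is to prove the congruence by a direct $\pi$-adic expansion of the Gauss sum, with $\pi := \zeta_p - 1$ serving as a uniformizer for $\p$ in $\O_L$ (it uniformises the prime of $\Z[\zeta_p]$ below $\p$, and $\Q(\zeta_{q-1})/\Q$ is unramified at $p$). Starting from $\zeta_p^{\Tr(x)} = (1+\pi)^{\Tr(x)}$ and the binomial theorem (viewing $\Tr(x)\in\{0,1,\dots,p-1\}$ as an integer), I would swap the order of summation to obtain
\begin{equation*}
G_q(\chi_{\p}^{-r}) = \sum_{k=0}^{p-1}\pi^k S_k,\qquad S_k := \sum_{x\in\mathbb{F}_q^{\times}}\chi_{\p}^{-r}(x)\binom{\Tr(x)}{k}.
\end{equation*}

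Let $\omega\colon\mathbb{F}_q^{\times}\to\O_L^{\times}$ denote the Teichm\"uller lift, so that $\chi_{\p}^{-r}(x) = \omega(x)^{-r}$ and $\omega(x)\equiv x\pmod{\p}$; then $\omega(x)^{p^j}\equiv x^{p^j}\pmod{\p}$, which yields $\Tr(x) \equiv \sum_{j=0}^{n-1}\omega(x)^{p^j}\pmod{\p}$. Because $k!$ is a unit modulo $p$ for $k\le p-1$, I may substitute this congruence into the polynomial $\binom{Y}{k}$ and reduce each $S_k$ modulo $\p$ to a $\Z[1/k!]$-combination of the orthogonality sums $\sum_{x\in\mathbb{F}_q^{\times}}\omega(x)^c$, which equal $-1$ modulo $\p$ when $(q-1)\mid c$ and vanish otherwise. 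Expanding $\binom{\Tr(x)}{k}$ via the multinomial theorem, every surviving contribution corresponds to a tuple $(a_0,\dots,a_{n-1})$ of nonnegative integers with $\sum_j a_j\le k$ and $\sum_j a_j p^j \equiv r\pmod{q-1}$. A digit-reduction argument (replacing any $a_j\ge p$ by $a_j - p$ and incrementing $a_{j+1}$ cyclically, using $p^n\equiv 1\pmod{q-1}$) shows that $\min\sum_j a_j = s(r)$, attained uniquely at the base-$p$ expansion $(a_j) = (r_j)$. Hence $S_k\equiv 0\pmod{\p}$ for $k<s(r)$, while $S_{s(r)}$ picks up exactly one leading multinomial contribution, producing $S_{s(r)}\equiv -\frac{1}{s(r)!}\binom{s(r)}{r_0,\dots,r_{n-1}} \equiv -\frac{1}{t(r)}\pmod{\p}$.

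The main obstacle is the combinatorial bookkeeping: one must carefully separate the leading multinomial contribution in $\bigl(\sum_j\omega(x)^{p^j}\bigr)^k/k!$ from the Stirling-number corrections arising from the falling-factorial expansion of $\binom{Y}{k}$, and verify uniqueness of the minimising tuple via digit reduction. A cleaner route is to first settle the prime-field case $q=p$ (where $s(r)=r$, $t(r)=r!$, and the analysis collapses to the classical identity $\sum_{x=1}^{p-1}x^{-r}\binom{x}{k}\equiv -\delta_{k,r}/r!\pmod p$ for $0\le k\le r\le p-2$), then either run the general multinomial argument above or bootstrap to $q=p^n$ via iterated application of the Hasse--Davenport product formula~(\ref{Eq. the HD product formula}). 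The ``in particular'' assertion is immediate: for $1\le r\le q-2$ not all base-$p$ digits $r_j$ vanish, so $s(r)\ge 1$ and $G_q(\chi_{\p}^{-r})\in(\pi)\subseteq\p$.
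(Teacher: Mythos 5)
The paper does not actually prove this lemma: it is quoted as the classical Stickelberger congruence with a pointer to \cite[Theorem 3.6.6]{Cohen}, so there is no in-paper argument to compare against. Judged on its own merits, your sketch is essentially the standard elementary proof in the prime-field case $q=p$: there $s(r)=r\le p-2$, the expansion $G_p(\chi_{\p}^{-r})=\sum_{k=0}^{p-1}\pi^kS_k$ with $\pi=\zeta_p-1$ does contain the term $k=s(r)$, the terms with $k<s(r)$ are killed because $S_0=0$ exactly and $S_k\equiv 0\pmod{\p}$ forces $v_{\p}(\pi^kS_k)\ge k+(p-1)>s(r)$ for $k\ge 1$, and your digit-reduction and leading-multinomial computations check out.

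The genuine gap is the general prime-power case $q=p^n$ with $n\ge 2$, which is exactly the case the paper needs. Since $\Tr(x)\in\{0,\dots,p-1\}$, your expansion stops at $k=p-1$, whereas $s(r)$ can be as large as $n(p-1)-1$. When $s(r)\ge p$ there is no coefficient $S_{s(r)}$ to isolate, and the only information your argument yields, namely $S_k\equiv 0\pmod{\p}$ for every $k$, gives merely $v_{\p}(\pi^kS_k)\ge k+(p-1)$ (as $S_k\in\Z[\zeta_{q-1}]$ and $\p$ is unramified there), hence $v_{\p}(G_q(\chi_\p^{-r}))\ge p$ --- far short of $s(r)$. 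To go further one needs the expansion of $S_k$ modulo higher powers of $\p\cap\Z[\zeta_{q-1}]$, and the congruence $\Tr(x)\equiv\sum_j\omega(x)^{p^j}\pmod{\p}$ is only first-order information; this is where the Dwork/Gross--Koblitz machinery enters, not mere ``combinatorial bookkeeping.'' Your proposed fallback does not close the gap either: the Hasse--Davenport product formula (\ref{Eq. the HD product formula}) relates Gauss sums over the \emph{same} field $\mathbb{F}_q$ for characters differing by a character of order $m\mid q-1$, and the lifting formula (\ref{Eq. the HD lifting formula}) only treats norm-composed characters (order dividing $p-1$), so neither bootstraps the statement from $\mathbb{F}_p$ to $\mathbb{F}_{p^n}$ for a general $\chi_{\p}^{-r}$. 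The standard elementary proof of the general case instead runs an induction on the digits of $r$, using the factorization of Gauss sums into Jacobi sums, a matching congruence for $J(\chi_{\p}^{-a},\chi_{\p}^{-b})$ modulo $\p$, and the Frobenius invariance $G_q(\psi^p)=G_q(\psi)$, which is compatible with the cyclic digit shift $s(pr\bmod (q-1))=s(r)$ and $t(pr\bmod (q-1))=t(r)$. The final ``in particular'' assertion is indeed immediate once the main congruence is available.
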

	
	 Let $m$ be a positive integer. We next consider a permutation of $\mathbb{Z}/m\mathbb{Z}$. Let $a\in\mathbb{Z}$ with $\gcd(a,m)=1$. Then the map $x\mod m\mapsto ax\mod m$ induces a permutation $\tau_m(a)$ of $\mathbb{Z}/m\mathbb{Z}$. Lerch \cite{Lerch} determined the sign of this permutation.
	 
	 \begin{lemma}\label{Lem. the Lerch permutation}
	 	Let notations be as above and let $\sign{\tau_m(a)}$ denote the sign of $\tau_m(a)$. Then 
	 	\begin{equation*}
	 		\sign{\tau_m(a)}=
	 		\begin{cases}
	 			(\frac{a}{m})  & \mbox{if}\ m\equiv 1\pmod 2,\\
	 			1              & \mbox{if}\ m\equiv 2\pmod 4,\\
	 			(-1)^{(a-1)/2} & \mbox{if}\ m\equiv 0\pmod 4,
	 		\end{cases}
	 	\end{equation*}
 	where $(\frac{\cdot}{m})$ is the Jacobi symbol if $m$ is odd. In particular, we have 
 	\begin{equation}\label{Eq. the sign of -1}
 		\sign{\tau_{m}(-1)}=(-1)^{\frac{(m-1)(m-2)}{2}}.
 	\end{equation}
	 \end{lemma}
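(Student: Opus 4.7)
My plan is to establish the sign formula via the classical Zolotarev/Lerch strategy: first prove that $a\mapsto \sign{\tau_m(a)}$ is a group homomorphism from $(\mathbb{Z}/m\mathbb{Z})^\times$ to $\{\pm 1\}$, then reduce to prime powers via the Chinese Remainder Theorem, and finally analyze each prime-power case. Multiplicativity in $a$ is immediate because $\tau_m(ab)=\tau_m(a)\circ\tau_m(b)$. For the CRT reduction, if $m=m_1m_2$ with $\gcd(m_1,m_2)=1$, then $\tau_m(a)$ corresponds under the bijection $\mathbb{Z}/m\mathbb{Z}\cong\mathbb{Z}/m_1\mathbb{Z}\times\mathbb{Z}/m_2\mathbb{Z}$ to the product permutation $\tau_{m_1}(a)\times\tau_{m_2}(a)$, whose sign equals $\sign{\tau_{m_1}(a)}^{m_2}\sign{\tau_{m_2}(a)}^{m_1}$.

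For an odd prime power $m=p^k$, I would use that $(\mathbb{Z}/p^k\mathbb{Z})^\times$ is cyclic, so any homomorphism to $\{\pm 1\}$ is either trivial or the unique quadratic character, which coincides with the Jacobi symbol $(\tfrac{\cdot}{p^k})=(\tfrac{\cdot}{p})^k$. To distinguish these two possibilities, I would check one value: pick a primitive root $a$ modulo $p^k$, so that $\tau_{p^k}(a)$ restricted to the unit residues is a single cycle of length $\varphi(p^k)$, giving $\sign{\tau_{p^k}(a)}=(-1)^{\varphi(p^k)-1}=-1$, matching the Jacobi symbol at a non-residue. Combining across odd prime power factors and noting that the Jacobi symbol is multiplicative in its lower argument yields the case $m\equiv 1\pmod 2$. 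For the even prime power factor $2^k$, an explicit orbit count (split according to $v_2(\gcd(x,2^k))$) shows that $\sign{\tau_{2^k}(a)}$ equals $1$ when $k=1$, $1$ when $k=2$ regardless of $a$ (giving the case $m\equiv 2\pmod 4$ after combining with an odd part), and $(-1)^{(a-1)/2}$ when $k\ge 3$; then CRT recombination produces the stated three-case formula.

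The main obstacle I anticipate is the $2$-adic bookkeeping: the CRT recombination contributes factors of $\sign{\tau_{\text{odd part}}(a)}^{2^{k}}$ and $\sign{\tau_{2^k}(a)}^{\text{odd part}}$, and the exponents of the odd part's sign vanish (being even) precisely when $m$ is even, which is what makes the odd-$m$ Jacobi-symbol formula collapse in the even case. Keeping track of how the odd-part contribution is extinguished and how the residual $2$-part contribution depends on $a\pmod 8$ is the delicate bit. I would organise this as a short lemma giving $\sign{\tau_{2^k}(a)}$ explicitly for $k=1,2$ and $k\ge 3$, then invoke it.

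For the specialisation in \eqref{Eq. the sign of -1}, I would avoid applying the general formula and instead give a one-line direct argument: $\tau_m(-1)$ fixes exactly the elements $x$ with $2x\equiv 0\pmod m$, namely one element if $m$ is odd and two elements if $m$ is even, while all remaining elements are paired into transpositions $\{x,-x\}$; the number of transpositions is $(m-1)/2$, $(m-2)/2$ or $(m-2)/2$ respectively, and a quick parity check against $(m-1)(m-2)/2\pmod 2$ in each of the three residue classes $m\bmod 4\in\{1,2,3,0\}$ (using that $m-1$ and $m-2$ have opposite parities) confirms the uniform formula $(-1)^{(m-1)(m-2)/2}$. This also provides an independent sanity check for the general formula at $a=-1$.
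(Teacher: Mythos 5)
The paper does not actually prove this lemma: it is quoted as a known theorem of Lerch, with a citation to \cite{Lerch}, and only the specialisation \eqref{Eq. the sign of -1} is ever used. Your overall strategy (multiplicativity in $a$, CRT splitting with $\sign{\tau_{m_1m_2}(a)}=\sign{\tau_{m_1}(a)}^{m_2}\sign{\tau_{m_2}(a)}^{m_1}$, prime-power analysis) is the classical Zolotarev--Lerch argument and is the right way to prove the general formula; your separate direct argument for $\sign{\tau_m(-1)}$ via counting the transpositions $\{x,-x\}$ is correct and is in fact all the paper needs.

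There is, however, a concrete error in your odd-prime-power step. The permutation $\tau_{p^k}(a)$ acts on all of $\mathbb{Z}/p^k\mathbb{Z}$, not just on the units, so for $k\ge 2$ you cannot read off its sign from the single cycle on $(\mathbb{Z}/p^k\mathbb{Z})^{\times}$. Stratifying by $v_p(x)$ (exactly as you propose to do for the $2$-part) gives
\begin{equation*}
\sign{\tau_{p^k}(a)}=\prod_{i=1}^{k}\epsilon_i(a),
\end{equation*}
where $\epsilon_i(a)$ is the sign of multiplication by $a$ on $(\mathbb{Z}/p^{i}\mathbb{Z})^{\times}$; each $\epsilon_i$ is the nontrivial quadratic character of a cyclic group, i.e.\ the Legendre symbol $(\frac{\cdot}{p})$, so the product is $(\frac{a}{p})^k=(\frac{a}{p^k})$. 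In particular, for a primitive root $a$ the true value is $(-1)^k$, not the $-1$ you assert; and your identification of ``the unique quadratic character of $(\mathbb{Z}/p^k\mathbb{Z})^{\times}$'' with $(\frac{\cdot}{p})^k$ fails when $k$ is even, since $(\frac{\cdot}{p})^k$ is then trivial. The conclusion you want is still $(\frac{a}{p^k})$, but it comes out of the orbit stratification, not out of the two-sentence argument you give. With that step repaired (and the analogous bookkeeping for $2^k$ carried out as you indicate), the proof goes through.
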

	
	For simplicity, $\sum_{a\in\mathbb{F}_q}$ and $\sum_{a\in\mathbb{F}_q^{\times}}$ will be abbreviated as $\sum_{a}$ and $\sum_{a\neq 0}$ respectively. Also, let $M_m(\O_L)$ denote the set of $m\times m$ matrices with all entries contained in $\O_L$. For any $M_1,M_2\in M_m(\O_L)$, the symbol 
	$$M_1\equiv M_2\pmod{\p}$$
	denotes that every entry of $M_1-M_2$ is contained in the prime ideal $\p$. 
	
	Now we are in a position to prove our first theorem. 
	
	{\noindent{\bf Proof of Theorem \ref{Thm. general Aq(k)}.}} (i) Recall that $L=\mathbb{Q}(\zeta_{q-1},\zeta_p)$. As $\gcd(q-1,p)=1$, we have 
	$\mathbb{Q}(\zeta_{q-1})\cap\mathbb{Q}(\zeta_p)=\mathbb{Q}$ and hence 
	\begin{equation*}
		\Gal(L/\mathbb{Q})\cong\Gal(L/\mathbb{Q}(\zeta_{q-1}))\oplus\Gal(L/\mathbb{Q}(\zeta_{p}))\cong \left(\mathbb{Z}/p\mathbb{Z}\right)^{\times}\oplus\left(\mathbb{Z}/(q-1)\mathbb{Z}\right)^{\times}.
	\end{equation*}
    For any $\sigma_{l,s}\in\Gal(L/\mathbb{Q})$ with $l,s\in\mathbb{Z}$, $\gcd(l,p)=1$ and $\gcd(s,q-1)=1$, we have $\sigma_{l,s}(\zeta_{p})=\zeta_p^l$ and $\sigma_{l,s}(\zeta_{q-1})=\zeta_{q-1}^s$. 
    
    Now for any $\psi\in\widehat{\mathbb{F}_q^{\times}}$, one can verify that 
    \begin{align*}
    	\sigma_{l,s}\left(G_q(\psi)\right)
    	&=\sigma_{l,s}\left(\sum_{a}\psi(a)\zeta_p^{\Tr(a)}\right)\\
    	&=\sum_{a}\psi^s(a)\zeta_{p}^{\Tr(la)}\\
    	&=\psi^{-s}(l)\sum_{a}\psi^s(la)\zeta_{p}^{\Tr(la)}\\
    	&=\psi^{-s}(l)\sum_{a}\psi^s(a)\zeta_{p}^{\Tr(a)}\\
    	&=\psi^{-s}(l)G_q(\psi^s).
    \end{align*}
	Letting $m=(q-1)/k$, by the above and Lemma \ref{Lem. the Lerch permutation} we have 
	\begin{align*}
		\sigma_{l,s}\left(\det A_q(k)\right)
		&=\det\left[\chi^{-(ksi+ksj)}(l)G_q(\chi^{ksi+ksj})\right]_{0\le i,j\le m-1}\\
		&=\left(\prod_{0\le r\le m-1}\chi^{-ksr}(l)\right)^2\cdot \sign{\tau_{m}(s)}^2\cdot \det A_q(k)\\
		&=\det A_q(k).
	\end{align*}
	Noting that $\gcd(s,m)=1$, the last equality follows from 
	\begin{align*}
		\left(\prod_{0\le r\le m-1}\chi^{-ksr}(l)\right)^2
		&=\prod_{0\le r\le m-1}\chi^{kr}(l)\cdot \prod_{0\le r\le m-1}\chi^{-kr}(l)=1.
	\end{align*}
By the Galois theory, we have $\det A_q(k)\in\mathbb{Q}$. Note that $\det A_q(k)$ is an algebraic integer. Hence $\det A_q(k)\in\mathbb{Z}$. 

Also, for any integer $s$, the character $\chi^s$ is a generator of $\widehat{\mathbb{F}_q^{\times}}$ if and only if $\gcd(s,q-1)=1$. Hence for any generator $\chi'$, there is an integer $s$ with $\gcd(s,q-1)=1$ such that 
\begin{equation*}
	\det\left[G_q(\chi'^{(ki+kj)})\right]_{0\le i,j\le m-1}=\sigma_{1,s}\left(\det A_q(k)\right)=\det A_q(k).
\end{equation*}

In view of the above, we have proved (i) for $\det A_q(k)$. Using essentially the same method, one can also verify that (i) holds for $\det B_q(k)$. 
	
	(ii) By (\ref{Eq. the sign of -1}) of Lemma \ref{Lem. the Lerch permutation} we have 
	\begin{equation*}
		\det A_q(k)=(-1)^{\frac{(m-1)(m-2)}{2}}\det \left[G_q(\chi^{ki-kj})\right]_{0\le i,j\le m-1}.
	\end{equation*}
	By Lemma \ref{Lem. the Stickelberger congruence}, for any $0\le i\neq j\le m-1$ we have 
	$$G_q(\chi^{ki-kj})\equiv 0\pmod{\p}.$$
	By this and noting that $G_q(\varepsilon)=-1$, we obtain 
	\begin{equation*}
		\left[G_q(\chi^{ki-kj})\right]_{0\le i,j\le m-1}\equiv -I_m\pmod{\p},
	\end{equation*}
	where $I_m$ is the $m\times m$ identity matrix. By the above, we have 
	\begin{equation*}
		\det A_q(k)\equiv (-1)^{\frac{(m-1)(m-2)}{2}+m}\equiv (-1)^{\frac{m^2-m+2}{2}}\pmod p.
	\end{equation*}
	This completes the proof of (ii). 
	
	 In view of the above, we have completed the proof of Theorem \ref{Thm. general Aq(k)}.\qed

	\section{Proof of Theorem \ref{Thm. Aq(1) and Aq(2)}}

	We begin with follow known result in linear algebra. 
	
	\begin{lemma}\label{Lem. an easy lemma in linear algebra}
		Let $m$ be a positive integer and let $M$ be an $m\times m$ complex matrix. Let $\lambda_1,\cdots,\lambda_m\in\mathbb{C}$, and let $\v_1,\cdots,\v_m\in\mathbb{C}^m$ be column vectors. Suppose that 
		$$M\v_i=\lambda_i\v_i$$
		for each $1\le i\le m$ and that the vectors $\v_1,\cdots,\v_m$ are linearly independent over $\mathbb{C}$. Then $\lambda_1,\cdots,\lambda_m$ are exactly all the eigenvalues of $M$ (counting multiplicity). 
	\end{lemma}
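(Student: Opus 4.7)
The plan is to reduce the statement to the standard fact that similar matrices have identical characteristic polynomials. First I would assemble the given data into a single matrix identity: form the $m\times m$ matrix $P=[\v_1,\v_2,\cdots,\v_m]$ whose columns are the prescribed vectors, and the diagonal matrix $\Lambda=\diag(\lambda_1,\lambda_2,\cdots,\lambda_m)$. The hypothesis $M\v_i=\lambda_i\v_i$ for $1\le i\le m$ is then compactly expressed as the matrix equation $MP=P\Lambda$, since the $i$-th column of $MP$ is $M\v_i$ and the $i$-th column of $P\Lambda$ is $\lambda_i\v_i$.

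Next I would invoke the linear independence hypothesis: because $\v_1,\cdots,\v_m$ are linearly independent over $\mathbb{C}$ and there are exactly $m$ of them in $\mathbb{C}^m$, the matrix $P$ is invertible. Multiplying $MP=P\Lambda$ on the left by $P^{-1}$ yields $P^{-1}MP=\Lambda$, so $M$ is similar to $\Lambda$.

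Finally, I would appeal to the standard fact that similar matrices have the same characteristic polynomial:
$$\det(xI_m-M)=\det(P^{-1}(xI_m-M)P)=\det(xI_m-\Lambda)=\prod_{i=1}^{m}(x-\lambda_i).$$
Reading off the roots with multiplicity gives exactly $\lambda_1,\cdots,\lambda_m$ as the full list of eigenvalues of $M$, which is the conclusion. There is no substantive obstacle here; the only thing to be careful about is to use the equality of characteristic polynomials (rather than just the spectra as sets) so that the multiplicities are accounted for correctly.
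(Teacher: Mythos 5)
Your proof is correct and is exactly the standard argument the authors have in mind: the paper states this lemma without proof as a known fact, and the intended justification is precisely your observation that $MP=P\Lambda$ with $P$ invertible forces $M$ and $\Lambda$ to be similar, hence to share a characteristic polynomial, which accounts for multiplicities. Nothing further is needed.
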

	
	Recall that $\chi$ is a generator of $\widehat{\mathbb{F}_q^{\times}}$ and $k\mid q-1$ is a positive integer. Throughout the remaining part of this section, we let $m=(q-1)/k$. Let $U_k:=\{x\in\mathbb{F}_q:\ x^k=1\}$ be the set of all $k$th roots of unity in $\mathbb{F}_q$. 
	
	We also need the following known result. 
	
	\begin{lemma}\label{Lem. sums involving kth characters}
		Let notation be as above. Then for any $x\in\mathbb{F}_q$ we have 
		\begin{equation*}
			\sum_{0\le r\le m-1}\chi^{kr}(x)=
			\begin{cases}
				m  & \mbox{if}\ x\in U_k,\\
				0  & \mbox{otherwise.}
			\end{cases}
		\end{equation*}
	\end{lemma}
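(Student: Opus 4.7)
The statement is a standard orthogonality relation, so the plan is essentially to unwind the definitions and apply the geometric series formula. The key observation is that since $\chi$ generates $\widehat{\mathbb{F}_q^{\times}}$, which has order $q-1$, the character $\chi^k$ has order exactly $m = (q-1)/k$, and its kernel is exactly $U_k$.

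The proof would proceed by cases on $x$. First, I would dispatch the case $x=0$ immediately: by the paper's convention $\psi(0)=0$ for every $\psi\in\widehat{\mathbb{F}_q^{\times}}$, so every summand vanishes and the sum equals $0$, which agrees with the ``otherwise'' branch since $0\notin U_k$. Next, for $x\in U_k$ I would note that $\chi^k(x)=\chi(x^k)=\chi(1)=1$, so $\chi^{kr}(x)=(\chi^k(x))^r=1$ for every $r$, giving the sum $m$ as claimed.

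The remaining case is $x\in\mathbb{F}_q^{\times}\setminus U_k$. Here I would argue that $\omega:=\chi^k(x)$ is a nontrivial $m$-th root of unity: it is an $m$-th root of unity because $\omega^m=\chi^{km}(x)=\chi^{q-1}(x)=\varepsilon(x)=1$; and it is nontrivial because $\ker(\chi^k)=U_k$ (both subgroups have order $k$, and the containment $U_k\subseteq\ker(\chi^k)$ is immediate from $\chi^k(x)=\chi(x^k)$). Then the geometric series gives
\[
\sum_{r=0}^{m-1}\chi^{kr}(x)=\sum_{r=0}^{m-1}\omega^r=\frac{\omega^m-1}{\omega-1}=0.
\]

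No step looks like a real obstacle; the only point that requires a moment of care is the identification $\ker(\chi^k)=U_k$, which uses both that $\chi$ is a generator (so $\chi^k$ has order precisely $m$, hence kernel of size $k$) and that $U_k$ likewise has cardinality $k$ since $k\mid q-1$. Once this is in place, the three cases combine to give exactly the claimed formula.
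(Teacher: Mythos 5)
Your proof is correct. The paper states this lemma without proof, simply citing it as a known result, so there is nothing to compare against; your argument is the standard orthogonality computation, and you handle the two points that actually need care --- the convention $\chi^{kr}(0)=0$ (which is consistent with the ``otherwise'' branch since $0\notin U_k$) and the identification $\ker(\chi^k)=U_k$ via the order count $|U_k|=k=|{\ker(\chi^k)}|$ --- correctly.
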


	Define 
	\begin{equation}\label{Eq. definition of Cq(k)}
		C_q(k):=\left[G_q(\chi^{ki-kj})\right]_{0\le i,j\le m-1}.
	\end{equation}
The next lemma determines all the eigenvalues of $C_q(k)$. 
	
	\begin{lemma}\label{Lem. eigenvalues of Cq(k)}
		Let notations be as above. For any element $b$ in the quotient group $\mathbb{F}_q^{\times}/U_k$, let 
		\begin{equation}\label{Eq. definition of lambda b}
			\lambda_b:=m\sum_{y\in U_k}\zeta_p^{\Tr(by)}.
		\end{equation}
	Then these $\lambda_b$ (where $b\in\mathbb{F}_q^{\times}/U_k$) are exactly all the eigenvalues of $C_q(k)$. 
	\end{lemma}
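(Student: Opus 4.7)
The plan is to diagonalize $C_q(k)$ explicitly: I will exhibit $m$ linearly independent vectors $\v_b$ indexed by $b\in\mathbb{F}_q^{\times}/U_k$ such that $C_q(k)\v_b=\lambda_b\v_b$, and then appeal to Lemma \ref{Lem. an easy lemma in linear algebra}. The structural reason this should succeed is that the $(i,j)$-entry $G_q(\chi^{ki-kj})$ depends only on $i-j$, and is in fact periodic in $i-j$ modulo $m$ (since $\chi^k$ has order $m$), so $C_q(k)$ is a genuine circulant matrix over $\mathbb{Z}/m\mathbb{Z}$. Its eigenvectors should therefore come from the characters of the index group, realized concretely by powers of $\chi^k$.

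For each coset representative $b\in\mathbb{F}_q^{\times}/U_k$, I would define
\[
\v_b=\bigl(\chi^{ki}(b)\bigr)_{0\le i\le m-1}^{T}.
\]
Because $\chi^k$ has order $m=(q-1)/k$, its kernel is the unique subgroup of $\mathbb{F}_q^{\times}$ of order $k$, namely $U_k$; in particular $\chi^{ki}(y)=1$ for every $y\in U_k$, so $\v_b$ depends only on the coset $bU_k$ and the definition is unambiguous. To verify the eigenvalue identity, I would expand the Gauss sum inside $C_q(k)\v_b$ and swap the order of summation:
\begin{align*}
(C_q(k)\v_b)_i
&=\sum_{j=0}^{m-1}G_q(\chi^{ki-kj})\,\chi^{kj}(b)\\
&=\sum_{x\neq 0}\chi^{ki}(x)\,\zeta_p^{\Tr(x)}\sum_{j=0}^{m-1}\chi^{kj}(b/x).
\end{align*}
Lemma \ref{Lem. sums involving kth characters} collapses the inner sum to $m$ precisely when $b/x\in U_k$, i.e.\ $x\in bU_k$, and to $0$ otherwise. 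Writing $x=by$ with $y\in U_k$ and using $\chi^{ki}(y)=1$, the outer expression simplifies to $\chi^{ki}(b)\cdot m\sum_{y\in U_k}\zeta_p^{\Tr(by)}=\lambda_b(\v_b)_i$.

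It remains to check that the $m$ vectors $\{\v_b\}_{b\in\mathbb{F}_q^{\times}/U_k}$ are linearly independent over $\mathbb{C}$. Fixing a generator $g$ of $\mathbb{F}_q^{\times}$, taking coset representatives $g^0,g^1,\ldots,g^{m-1}$, and setting $\omega=\chi(g)^k$ (a primitive $m$-th root of unity, since $\chi^k$ has order $m$), one computes $(\v_{g^s})_i=\omega^{is}$. The matrix whose columns are $\v_{g^0},\ldots,\v_{g^{m-1}}$ is thus a Vandermonde matrix in the distinct values $1,\omega,\ldots,\omega^{m-1}$, hence invertible. Lemma \ref{Lem. an easy lemma in linear algebra} then delivers the full list of eigenvalues. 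I do not anticipate any serious obstacle: the entire argument reduces to the eigenvector computation in the second paragraph, which depends only on Lemma \ref{Lem. sums involving kth characters} and on the triviality of $\chi^k$ on $U_k$.
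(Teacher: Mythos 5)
Your proposal is correct and follows essentially the same route as the paper: the same eigenvectors $\v_b=\left(\chi^{ki}(b)\right)_{0\le i\le m-1}^{T}$, the same collapse of the inner character sum via Lemma \ref{Lem. sums involving kth characters}, a Vandermonde-type argument for linear independence, and the final appeal to Lemma \ref{Lem. an easy lemma in linear algebra}. Your added remarks on well-definedness over cosets and the explicit primitive $m$-th root of unity $\omega=\chi(g)^k$ only make the paper's argument more explicit.
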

	
	\begin{proof}
		For any $b\in\mathbb{F}_q^{\times}/U_k$, define the column vector 
		$$\v_b:=\left(\chi^{0}(b),\chi^{k}(b),\cdots,\chi^{k(m-1)}(b)\right)^T.$$
		Then by Lemma \ref{Lem. sums involving kth characters}  
		\begin{align*}
			  \sum_{0\le r\le m-1}G_q(\chi^{ki-kr})\chi^{kr}(b)
			&=\sum_{0\le r\le m-1}\sum_{a\neq 0}\chi^{ki}(a)\zeta_{p}^{\Tr(a)}\chi^{kr}\left(\frac{b}{a}\right)\\
			&=\sum_{a\neq 0}\chi^{ki}(a)\zeta_{p}^{\Tr(a)}\sum_{0\le r\le m-1}\chi^{kr}\left(\frac{b}{a}\right)\\
			&=m\sum_{y\in U_k}\chi^{ki}(by)\zeta_{p}^{\Tr(by)}\\
			&=\lambda_b\chi^{ki}(b).
		\end{align*}
	This implies $C_q(k)\v_b=\lambda_b\v_b$ for each $b\in\mathbb{F}_q^{\times}/U_k$. 
	
	On the other hand, let 
	$$V_k=\left[\chi^{ki}(b)\right]_{0\le i\le m-1,\ b\in\mathbb{F}_q^{\times}/U_k}.$$
	Then it is clear that 
	\begin{equation*}
		(\det V_k)^2=\pm\prod_{b,b'\in\mathbb{F}_q^{\times}/U_k, b\neq b'}\left(\chi^k(b)-\chi^k(b')\right)\neq 0.
	\end{equation*}
    Hence these $\v_b$ are linearly independent over $\mathbb{C}$. Now by Lemma \ref{Lem. an easy lemma in linear algebra}, these $\lambda_b$ (where $b\in\mathbb{F}_q^{\times}/U_k$) are exactly all the eigenvalues of $C_q(k)$. This completes the proof. 
	\end{proof}
	
	We also need the  following result concerning the trace map $\Tr:=\Tr_{\mathbb{F}_q/\mathbb{F}_p}$ (see \cite[Proposition 2.4.11]{Cohen}). 
	
	\begin{lemma}\label{Lem. the trace map}
		The trace map $\Tr$ is a surjective $\mathbb{F}_p$-homomorphism from $\mathbb{F}_q$ to $\mathbb{F}_p$. Also, 
		$$\Ker(\Tr)=\{x\in\mathbb{F}_q:\ \Tr(x)=0\}=\{x^p-x:\ x\in\mathbb{F}_q\}$$
		is an $\mathbb{F}_p$-subspace of dimension $n-1$. 
	\end{lemma}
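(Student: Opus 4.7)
\textbf{Proof proposal for Lemma \ref{Lem. the trace map}.} The plan is to verify the three assertions (linearity/surjectivity, the explicit form of the kernel, and its dimension) in the natural order, using only the definition $\Tr(x)=\sum_{j=0}^{n-1}x^{p^j}$ and basic properties of the Frobenius endomorphism $F\colon x\mapsto x^p$ of $\mathbb{F}_q$.

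First I would establish that $\Tr$ is $\mathbb{F}_p$-linear. Additivity is immediate because in characteristic $p$ the Frobenius $F$ satisfies $(x+y)^p=x^p+y^p$, so each summand $x\mapsto x^{p^j}$ is additive, and hence so is their sum. $\mathbb{F}_p$-homogeneity follows from the fact that $c^{p^j}=c$ for every $c\in\mathbb{F}_p$, so $\Tr(cx)=\sum c\cdot x^{p^j}=c\Tr(x)$. It remains to check the codomain: one needs $\Tr(x)\in\mathbb{F}_p$, which I would verify by applying $F$ to $\Tr(x)$ and observing the resulting sum is just a cyclic shift of the original (using $x^{p^n}=x$), so $F(\Tr(x))=\Tr(x)$, giving $\Tr(x)\in\mathbb{F}_p$.

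Next I would prove surjectivity. Since $\mathbb{F}_p$ is one-dimensional over itself, it suffices to produce a single $x\in\mathbb{F}_q$ with $\Tr(x)\neq 0$. But $\Tr$ is a polynomial in $x$ of degree $p^{n-1}<p^n=|\mathbb{F}_q|$, so it cannot vanish identically on $\mathbb{F}_q$. Hence $\Tr$ is a surjective $\mathbb{F}_p$-linear map from an $n$-dimensional space to a $1$-dimensional space, and rank-nullity immediately gives $\dim_{\mathbb{F}_p}\Ker(\Tr)=n-1$.

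The final, and slightly more delicate, step is to identify $\Ker(\Tr)$ with the image of the Artin--Schreier map $\varphi\colon x\mapsto x^p-x$. The inclusion $\varphi(\mathbb{F}_q)\subseteq\Ker(\Tr)$ is a telescoping check: $\Tr(x^p-x)=\sum_{j=0}^{n-1}(x^{p^{j+1}}-x^{p^j})=x^{p^n}-x=0$. For equality I would compare dimensions. The map $\varphi$ is $\mathbb{F}_p$-linear, and its kernel is exactly the set of roots of $X^p-X$ in $\mathbb{F}_q$, which is $\mathbb{F}_p$; so $\dim_{\mathbb{F}_p}\varphi(\mathbb{F}_q)=n-1=\dim_{\mathbb{F}_p}\Ker(\Tr)$. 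Combining the inclusion with the dimension match forces equality. The only step that needs any care is noticing the telescoping identity and the identification of $\Ker(\varphi)=\mathbb{F}_p$; the rest is formal linear algebra over $\mathbb{F}_p$.
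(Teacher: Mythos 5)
Your proof is correct and complete. Note that the paper does not actually prove this lemma: it is quoted as a known fact with a citation to \cite[Proposition 2.4.11]{Cohen}, so there is no in-paper argument to compare against; your write-up supplies the standard self-contained proof that such a reference would contain. All the key points are handled properly: the Frobenius-fixed-point argument showing $\Tr(x)\in\mathbb{F}_p$, the telescoping identity $\Tr(x^p-x)=x^{p^n}-x=0$, and the dimension count $\dim_{\mathbb{F}_p}\varphi(\mathbb{F}_q)=n-1$ via $\Ker(\varphi)=\mathbb{F}_p$ to upgrade the inclusion $\varphi(\mathbb{F}_q)\subseteq\Ker(\Tr)$ to equality. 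One detail worth praising: you prove surjectivity by the degree bound $\deg\bigl(\sum_{j=0}^{n-1}X^{p^j}\bigr)=p^{n-1}<p^n$, rather than the tempting but flawed shortcut $\Tr(1)=n\neq 0$, which breaks down precisely when $p\mid n$; your argument is uniform in $p$ and $n$, which matters since the paper allows arbitrary prime powers $q=p^n$.
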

	
	Now we are in a position to prove our second theorem.
	
	{\noindent{\bf Proof of Theorem \ref{Thm. Aq(1) and Aq(2)}.}} (i) By Lemma \ref{Lem. the Lerch permutation} and Lemma \ref{Lem. eigenvalues of Cq(k)}, one can verify that $\det A_q(1)$ is equal to 
	\begin{align*}
		 \sign{\tau_{q-1}(-1)}\cdot\det C_q(1)
		&=(-1)^{\frac{(q-2)(q-3)}{2}}(q-1)^{q-1}\prod_{b\in\mathbb{F}_q^{\times}}\zeta_{p}^{\Tr(b)}\\
		&=(-1)^{\frac{(q-2)(q-3)}{2}}(q-1)^{q-1}.
	\end{align*}
	The last equality follows from 	$\sum_{b\in\mathbb{F}_q^{\times}}\Tr(b)=0$. 
	
	(ii) Note that $2\nmid p$. We first consider $\det C_q(2)$ defined by (\ref{Eq. definition of Cq(k)}). By Lemma \ref{Lem. eigenvalues of Cq(k)} we have 
	\begin{equation}\label{Eq. 1 in the proof of Thm. 2}
		\det C_q(2)=\left(\frac{q-1}{2}\right)^{\frac{q-1}{2}}\prod_{b\in\mathbb{F}_q^{\times}/U_2}\left(\zeta_p^{\Tr(b)}+\zeta_p^{\Tr(-b)}\right).
	\end{equation}
	Let 
	$$\xi=\prod_{b\in\mathbb{F}_q^{\times}/U_2}\left(\zeta_p^{\Tr(b)}+\zeta_p^{\Tr(-b)}\right).$$
	Then by Lemma \ref{Lem. the trace map} we have 
	\begin{align*}
		 \xi^2
		&=\prod_{b\in\mathbb{F}_q^{\times}/U_2}\left(\zeta_p^{\Tr(b)}+\zeta_p^{\Tr(-b)}\right)^2\\
		&=\prod_{b\in\mathbb{F}_q^{\times}}\left(\zeta_p^{\Tr(b)}+\zeta_p^{\Tr(-b)}\right)\\
		&=\prod_{b\in\mathbb{F}_q^{\times}}\zeta_{p}^{\Tr(-b)}\prod_{b\in\mathbb{F}_q^{\times}}\left(1+\zeta_p^{\Tr(2b)}\right)\\
		&=\prod_{b\in\mathbb{F}_q^{\times}}\left(1+\zeta_p^{\Tr(b)}\right)\\
		&=\prod_{b\in\Ker(\Tr)\setminus\{0\}}2\times\prod_{b\in\mathbb{F}_q^{\times}\setminus\Ker(\Tr)}\left(1+\zeta_p^{\Tr(b)}\right)\\
		&=2^{p^{n-1}-1}\prod_{b\in\mathbb{F}_q^{\times}\setminus\Ker(\Tr)}\left(1+\zeta_p^{\Tr(b)}\right).
	\end{align*}
	Note that 
	\begin{align*}
			\prod_{b\in\mathbb{F}_q^{\times}\setminus\Ker(\Tr)}\left(1+\zeta_p^{\Tr(b)}\right)
			&=\prod_{b\in\mathbb{F}_q^{\times}\setminus\Ker(\Tr)}\frac{1-\zeta_p^{\Tr(2b)}}{1-\zeta_p^{\Tr(b)}}\\
			&=\prod_{b\in\mathbb{F}_q^{\times}\setminus\Ker(\Tr)}\frac{1-\zeta_p^{\Tr(b)}}{1-\zeta_p^{\Tr(b)}}\\
			&=1.
	\end{align*}
     In view of the above, we obtain 
	$$\xi=\pm 2^{\frac{p^{n-1}-1}{2}}$$
	and hence by (\ref{Eq. 1 in the proof of Thm. 2}) we have 
	\begin{equation*}
		\det C_q(2)=\pm \left(\frac{q-1}{2}\right)^{\frac{q-1}{2}}\cdot 2^{\frac{p^{n-1}-1}{2}}.
	\end{equation*}
	By Lemma \ref{Lem. the Lerch permutation} we also have 
	\begin{equation*}
		\det A_q(2)=\pm \left(\frac{q-1}{2}\right)^{\frac{q-1}{2}}\cdot 2^{\frac{p^{n-1}-1}{2}}.
	\end{equation*}
	Recall that $q=p^n$. Since $p>2$, it is easy to verify that 
	\begin{equation*}
		\left(\frac{q-1}{2}\right)^{\frac{q-1}{2}}\cdot 2^{\frac{p^{n-1}-1}{2}}\equiv (-1)^{\frac{q-1}{2}}\left(\frac{2}{p}\right)\equiv (-1)^{\frac{q-1}{2}+\frac{p^2-1}{8}}\pmod p.
	\end{equation*}
	By Theorem \ref{Thm. general Aq(k)}(ii) we have 
	$$\det A_q(2)\equiv (-1)^{(q-1)(q-3)/8 +1}\pmod p. $$
	Combining the above results and by some routine calculations, one can verify that 
		\begin{equation*}
		\det A_q(2)=(-1)^{\alpha_n}\left(\frac{q-1}{2}\right)^{\frac{q-1}{2}}2^{\frac{p^{n-1}-1}{2}},
	\end{equation*}
	where 
	\begin{equation*}
		\alpha_n=
		\begin{cases}
			1            & \mbox{if}\ n\equiv 1\pmod 2,\\
			(p^2+7)/8    & \mbox{if}\ n\equiv 0\pmod 2.
		\end{cases}
	\end{equation*}
  This completes the proof of Theorem \ref{Thm. Aq(1) and Aq(2)}. \qed 
	
	\section{Proof of Theorem \ref{Thm. Bq(1) and Bq(2)}}
	Recall that $q=p^n$ and $k\mid q-1$ is a positive integer. In this section, we also let $m=(q-1)/k$. We begin with the following known result (see \cite[Theorem 1.1.4]{B}).
	\begin{lemma}\label{Lem. relation between Gauss sums}
		Let $\psi\in\widehat{\mathbb{F}_q^{\times}}$ be a nontrivial character. Then 
		\begin{equation*}
			G_q(\psi)G_q(\psi^{-1})=\psi(-1)q.
		\end{equation*}
	\end{lemma}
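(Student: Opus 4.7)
The plan is to expand $G_q(\psi)G_q(\psi^{-1})$ as a double sum over $\mathbb{F}_q^\times\times\mathbb{F}_q^\times$ and then perform a multiplicative substitution that decouples the additive and multiplicative parts. Concretely, I would first write
$$G_q(\psi)G_q(\psi^{-1})=\sum_{x\neq 0}\sum_{y\neq 0}\psi(x)\psi^{-1}(y)\,\zeta_p^{\Tr(x+y)},$$
and then make the substitution $y=xz$ with $z\in\mathbb{F}_q^\times$. Because $\psi(x)\psi^{-1}(xz)=\psi^{-1}(z)$, this transforms the expression into
$$\sum_{z\neq 0}\psi^{-1}(z)\sum_{x\neq 0}\zeta_p^{\Tr(x(1+z))}.$$

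Next I would evaluate the inner additive-character sum. Using the surjectivity of the trace (Lemma \ref{Lem. the trace map}), one has $\sum_{x\in\mathbb{F}_q}\zeta_p^{\Tr(xu)}=q$ when $u=0$ and $0$ otherwise; consequently the inner sum equals $q-1$ at $z=-1$ and $-1$ for every other $z\in\mathbb{F}_q^\times$. Separating off the singular index $z=-1$, the total becomes
$$\psi^{-1}(-1)(q-1)-\sum_{\substack{z\neq 0\\ z\neq -1}}\psi^{-1}(z).$$

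Finally, since $\psi^{-1}$ is nontrivial, the orthogonality relation $\sum_{z\neq 0}\psi^{-1}(z)=0$ rewrites the tail sum as $-\psi^{-1}(-1)$, so the two pieces collapse to $\psi^{-1}(-1)\cdot q=\psi(-1)q$, where the last identification uses $\psi(-1)\in\{\pm 1\}$. The calculation is quite routine; the only point that requires a small amount of care is correctly isolating the index $z=-1$ when evaluating the inner character sum, and this is precisely the step that generates the factor $q$ on the right-hand side.
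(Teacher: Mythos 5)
Your argument is correct. The paper does not actually prove this lemma---it simply cites \cite[Theorem 1.1.4]{B}---and your computation (substituting $y=xz$ to reduce the double sum to $\sum_{z\neq 0}\psi^{-1}(z)\sum_{x\neq 0}\zeta_p^{\Tr(x(1+z))}$, evaluating the inner additive character sum as $q-1$ at $z=-1$ and $-1$ elsewhere, and then using orthogonality of the nontrivial character $\psi^{-1}$ together with $\psi(-1)=\pm 1$) is precisely the standard textbook derivation of that identity, with all steps justified.
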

	
	Using Lemma \ref{Lem. relation between Gauss sums}, it is easy to verify the following result.
	
	\begin{lemma}\label{Lem. transformation of Bq(k)}
		Let notations be as above. Then
		\begin{equation*}
			\left[\frac{1}{G_q(\chi^{-ki+kj})}\right]_{0\le i,j\le m-1}=\frac{1-q}{q}I_m+\frac{1}{q}D_q(k),
		\end{equation*} 
	where
	\begin{equation}\label{Eq. definition of Dq(k)}
		D_q(k):=\left[(-1)^{ki-kj}G_q(\chi^{ki-kj})\right]_{0\le i,j\le m-1}.
	\end{equation}
 Moreover, we have 
 \begin{equation*}
 	P_k^{-1}C_q(k)P_k=D_q(k),
 \end{equation*}
where $P_k$ is the diagonal matrix 
$$\diag\left((-1)^0,(-1)^{k},\cdots,(-1)^{(m-1)k}\right),$$ 
and $C_q(k)$ is defined by (\ref{Eq. definition of Cq(k)}). Hence $D_q(k)$ and $C_q(k)$ have the same eigenvalues.
	\end{lemma}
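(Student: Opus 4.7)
I would prove the lemma in three steps, matching the three assertions in the statement.

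For the first identity, I would verify equality entry-by-entry, splitting into the diagonal and off-diagonal cases. On the diagonal ($i=j$), the character $\chi^{-ki+kj}$ is trivial, and a direct calculation from $G_q(\varepsilon) = \sum_{x\in\mathbb{F}_q^{\times}}\zeta_p^{\Tr(x)} = -1$ (which follows from $\sum_{x\in\mathbb{F}_q}\zeta_p^{\Tr(x)}=0$) shows that the left-hand side has diagonal entry $-1$; the right-hand side has diagonal entry $(1-q)/q + (-1)/q = -1$, matching. For off-diagonal entries ($i\ne j$), the character $\chi^{-ki+kj}$ is nontrivial, so I can invoke Lemma~\ref{Lem. relation between Gauss sums} to get
\begin{equation*}
G_q(\chi^{-ki+kj})\,G_q(\chi^{ki-kj}) = \chi^{-ki+kj}(-1)\,q.
\end{equation*}
Since $\chi$ generates $\widehat{\mathbb{F}_q^{\times}}$ and (for $q$ odd) $-1$ is the unique element of order $2$ in $\mathbb{F}_q^{\times}$, one has $\chi(-1)=-1$, hence $\chi^{r}(-1) = (-1)^{r}$. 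Therefore $1/G_q(\chi^{-ki+kj}) = (-1)^{ki-kj}G_q(\chi^{ki-kj})/q$, which is precisely $D_q(k)_{ij}/q$, matching the off-diagonal of the right-hand side.

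For the second identity, I would carry out a direct computation. Since $P_k$ is diagonal with $(r,r)$-entry $(-1)^{kr}$, the conjugation $P_k^{-1}C_q(k)P_k$ rescales the $(i,j)$-entry of $C_q(k)$ by $(-1)^{kj}/(-1)^{ki} = (-1)^{kj-ki} = (-1)^{ki-kj}$, so
\begin{equation*}
\bigl(P_k^{-1}C_q(k)P_k\bigr)_{ij} = (-1)^{ki-kj}G_q(\chi^{ki-kj}) = D_q(k)_{ij}.
\end{equation*}
The third assertion is immediate: since $P_k$ is invertible (a diagonal $\pm 1$ matrix), $C_q(k)$ and $D_q(k)$ are similar and thus share the same eigenvalues.

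The only mild subtlety is the use of $\chi(-1)=-1$, which requires $q$ odd; this is consistent with the hypotheses under which the lemma is applied (e.g.\ Theorem~\ref{Thm. Bq(1) and Bq(2)}(ii)--(iii)), and there is no other real obstacle — everything reduces to bookkeeping of signs and the identification $G_q(\varepsilon)=-1$.
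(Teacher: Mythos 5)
Your proof is correct and takes exactly the route the paper intends: the paper gives no details, stating only that the lemma follows easily from Lemma \ref{Lem. relation between Gauss sums}, and your entry-by-entry verification (diagonal via $G_q(\varepsilon)=-1$, off-diagonal via $G_q(\psi)G_q(\psi^{-1})=\psi(-1)q$ together with $\chi(-1)=-1$, then the diagonal conjugation) is precisely that omitted computation. Your closing caveat is also well taken: the identity $\chi^{k(i-j)}(-1)=(-1)^{k(i-j)}$ genuinely requires $q$ odd, so the first display of the lemma as literally stated fails for even $q$, although the eigenvalue conclusion used in Theorem \ref{Thm. Bq(1) and Bq(2)}(i) survives because $C_q(k)$ and $D_q(k)$ are similar in any case.
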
 
	
	{\noindent{\bf Proof of Theorem \ref{Thm. Bq(1) and Bq(2)}.}} (i) Since $\cos\frac{2\pi\Tr(by)}{p}\le 1$, we clearly have 
	\begin{equation}\label{Eq. sum=0 in the proof of Thm. 3}
		\sum_{y\in U_k}\left(\zeta_p^{\Tr(by)}-1\right)=0\Leftrightarrow \Tr(by)=0\ \text{for any}\ y\in U_k.
	\end{equation}
	By Lemma \ref{Lem. the Lerch permutation} 
	\begin{equation*}
		\det B_q(k)=\sign{\tau_m(-1)}\cdot\det\left[G_q(\chi^{-ki+kj})^{-1}\right]_{0\le i,j\le m-1}. 
	\end{equation*}
	Hence by Lemma \ref{Lem. transformation of Bq(k)}, Lemma \ref{Lem. eigenvalues of Cq(k)} and (\ref{Eq. sum=0 in the proof of Thm. 3}), it is clear that 
	\begin{align}
		\det B_q(k)\neq 0
		&\Leftrightarrow \frac{1-q}{q}+\frac{1}{q}\lambda_b\neq 0\ (\forall b\in\mathbb{F}_q^{\times}/U_k) \notag \\ &\Leftrightarrow\sum_{y\in U_k}\left(\zeta_p^{\Tr(by)}-1\right)\neq 0\ (\forall b\in\mathbb{F}_q^{\times}) \notag \\
		&\Leftrightarrow \{b\in\mathbb{F}_q^{\times}:\ \Tr(by)=0\ \text{for any}\ y\in U_k\}=\emptyset.\label{Eq. the condition for Bq(k) being nonsingular}
	\end{align}
	
	"$\Leftarrow$". Suppose $o_k(p)=n$. Then the smallest extension of $\mathbb{F}_p$ containing $U_k$ is $\mathbb{F}_q$, i.e., 
	\begin{equation}\label{Eq. the smallest field containing Uk}
			\mathbb{F}_p\left(U_k\right)=\mathbb{F}_q.
	\end{equation}
	By the linearity of the trace map and (\ref{Eq. the smallest field containing Uk}), we have 
	\begin{equation*}
		\{b\in\mathbb{F}_q^{\times}:\ \Tr(by)=0\ \text{for any}\ y\in U_k\}=\{b\in\mathbb{F}_q^{\times}:\ \Tr(by)=0\ \text{for any}\ y\in \mathbb{F}_q\}=\emptyset.
	\end{equation*}
	Hence by (\ref{Eq. the condition for Bq(k) being nonsingular}) the matrix $B_q(k)$ is nonsingular.
	
	"$\Rightarrow$". Suppose $\det B_q(k)\neq 0$. If $o_k(p)=r<n$, then the smallest extension of $\mathbb{F}_p$ containing $U_k$ is the field $\mathbb{F}_{p^r}\subsetneq\mathbb{F}_q$. By \cite[Proposition 2.4.11]{Cohen}, the trace map $\Tr_{\mathbb{F}_q/\mathbb{F}_{p^r}}$ is surjective and $\Ker(\Tr_{\mathbb{F}_q/\mathbb{F}_{p^r}})$ is an $\mathbb{F}_{p^r}$-subspace of dimension $n/r-1$. As $r<n$, there is an element $b\in\mathbb{F}_q^{\times}$ such that $\Tr_{\mathbb{F}_q/\mathbb{F}_{p^r}}(b)=0$. Hence for any $y\in U_k\subseteq\mathbb{F}_{p^r}$, we have 
	\begin{align*}
		\Tr(by)
		&=\Tr_{\mathbb{F}_{p^r}/\mathbb{F}_p}\circ\Tr_{\mathbb{F}_q/\mathbb{F}_{p^r}}(by)\\
		&=\Tr_{\mathbb{F}_{p^r}/\mathbb{F}_p}\left(y\Tr_{\mathbb{F}_q/\mathbb{F}_{p^r}}(b)\right)\\
		&=0.
	\end{align*}
	This, together with (\ref{Eq. the condition for Bq(k) being nonsingular}), implies that $\det B_q(k)=0$, which is a contradiction. Hence we must have $o_k(p)=n$. This completes the proof of (i). 
	
	(ii) Observe first that 
	\begin{equation}\label{Eq. an easy evaluation for roots of unity}
		\prod_{b\in\mathbb{F}_p^{\times}}\left(1-\zeta_p^b\right)=\lim_{x\rightarrow 1}\frac{x^p-1}{x-1}=p.
	\end{equation}
	Using this and by Lemma \ref{Lem. transformation of Bq(k)}, Lemma \ref{Lem. eigenvalues of Cq(k)} and (\ref{Eq. definition of lambda b}), we have 
	\begin{align*}
		\det [G_p(\chi^{-i+j})^{-1}]_{0\le i,j\le p-2}
		&=\prod_{b\in\mathbb{F}_p^{\times}}\left(\frac{1-p}{p}+\frac{p-1}{p}\zeta_p^b\right)\\
		&=\left(\frac{1-p}{p}\right)^{p-1}\prod_{b\in\mathbb{F}_p^{\times}}(1-\zeta_p^b)\\
		&=\left(\frac{1-p}{p}\right)^{p-1}p.
	\end{align*}
	By Lemma \ref{Lem. the Lerch permutation} we have 
	$$\det B_p(1)=\frac{(-1)^{\frac{p(p+1)}{2}}(p-1)^{p-1}}{p^{p-2}}. $$
	
	(iii) By Lemma \ref{Lem. transformation of Bq(k)}, Lemma \ref{Lem. eigenvalues of Cq(k)}, (\ref{Eq. definition of lambda b}) and (\ref{Eq. an easy evaluation for roots of unity}) again, and noting that $2\nmid p$, one can verify that 
	\begin{align*}
		 \det [G_p(\chi^{-2i+2j})^{-1}]_{0\le i,j\le (p-3)/2}
		&=\left(\frac{p-1}{2p}\right)^{\frac{p-1}{2}}\prod_{b\in\mathbb{F}_p^{\times}/U_2}
			\left(\zeta_p^{b}+\zeta_p^{-b}-2\right)\\
		&=\left(\frac{p-1}{2p}\right)^{\frac{p-1}{2}}\prod_{b\in\mathbb{F}_p^{\times}/U_2}
		\left(\zeta_p^{2b}+\zeta_p^{-2b}-2\right)\\
		&=\left(\frac{p-1}{2p}\right)^{\frac{p-1}{2}}\prod_{b\in\mathbb{F}_p^{\times}/U_2}
		\left(\zeta_p^{b}-\zeta_p^{-b}\right)^2\\
		&=\left(\frac{p-1}{2p}\right)^{\frac{p-1}{2}}\cdot (-1)^{\frac{p-1}{2}}\prod_{b\in\mathbb{F}_p^{\times}}
		\left(\zeta_p^{b}-\zeta_p^{-b}\right)\\
		&=\left(\frac{p-1}{2p}\right)^{\frac{p-1}{2}}\cdot (-1)^{\frac{p-1}{2}}\prod_{b\in\mathbb{F}_p^{\times}}\zeta_p^b
		\left(1-\zeta_p^{-2b}\right)\\
		&=\left(\frac{p-1}{2p}\right)^{\frac{p-1}{2}}\cdot (-1)^{\frac{p-1}{2}}\prod_{b\in\mathbb{F}_p^{\times}}
		\left(1-\zeta_p^{b}\right)\\
		&=(-1)^{\frac{p-1}{2}}p\left(\frac{p-1}{2p}\right)^{\frac{p-1}{2}}.
	\end{align*}
	Now by Lemma \ref{Lem. the Lerch permutation} we obtain 
	\begin{equation*}
		\det B_p(2)=(-1)^{\frac{(p+3)(p-1)}{4}}p\left(\frac{p-1}{2p}\right)^{\frac{p-1}{2}}.
	\end{equation*}
    
    In view of the above, we have completed the proof of Theorem \ref{Thm. Bq(1) and Bq(2)}.\qed

	\Ack\ The first author thanks Prof. Hao Pan for his steadfast encouragement. This work was supported by the Natural Science Foundation of China (Grant Nos. 12101321 and 12201291).

\end{document}